\colorlet{mylinkcolor}{RoyalBlue}
\colorlet{mycitecolor}{magenta}
\colorlet{myurlcolor}{Aquamarine}
\newtheorem{thm}{Theorem}[section]
\newtheorem{corollary}{Corollary}[thm]
\newtheorem{lemma}[thm]{Lemma}
\theoremstyle{definition}
\newtheorem{definition}{Definition}[section]
\theoremstyle{remark}
\newtheorem{remark}{Remark}
\theoremstyle{remark}
\newtheorem{example}{Example}
\newtheorem{alg}{Algorithm}
\newcommand{\RR}{{\mathbb R}}  
\newcommand{\ZZ}{{\mathbb Z}}   
\newcommand{\mc}[1]{{\mathcal{#1}}} 
\newcommand{\labeleq}[1]{\label{eq:#1}}
\newcommand{\refeq}[1]{\text{(\ref{eq:#1})}}
\newcommand{\ip}[2]{\langle#1,#2\rangle}
\renewcommand{\(}{\left(}
\renewcommand{\)}{\right)}
\author[1]{ Christina Frederick\thanks{christin@njit.edu}}
\author[2]{Kasso A.~Okoudjou\thanks{kasso@mit.edu}}
\affil[1]{Department of Mathematical Sciences, New Jersey Institute of Technology}
\affil[2]{Department of Mathematics, Massachusetts Institute of Technology}
\begin{document}

\title{Finding duality for Riesz bases of exponentials on multi-tiles}

\maketitle
\begin{abstract}

It is known \cite{Cabrelli2018, Grepstad2014, Kolountzakis2013} that if $\Omega \subset \mathbb{R}^{d}$ belongs to a class of multi-tiling domains when translated by a lattice $\Lambda$, there exists a Riesz basis of exponentials for $L^{2}(\Omega)$ constructed using $k$ translates of the dual lattice $\Lambda^*$. In this paper, we give an explicit construction of the corresponding biorthogonal dual Riesz basis. We also extend the iterative reconstruction algorithm introduced in \cite{Frederick2018} to this setting.
\end{abstract}


\section{Introduction}

This paper centers on Riesz bases of exponentials $\{e_l(x):=e^{2\pi i l \cdot x}\}_{l\in L}$
for the space $L^2(\Omega)$, where  $\Omega \subset \RR^d$ is a set of positive and finite Lebesgue measure, and $L \subset \RR^d$ is a countable set. The set  $\{e_l(x)\}_{l\in L}\subset L^2(\Omega)$
is a	Riesz basis for $L^2(\Omega)$ if each $f\in L^2(\Omega)$ has the unique representation
\begin{equation}\label{def-rb1}
f(x) = \sum_{l\in L} c_l e_l(x),
\end{equation} 
where 
the coefficients 
$\{c_l\}_{l\in L} \in \ell^2(L)$ satisfy
\begin{equation}\labeleq{def-rb2}
A\|f\|_{L^2(\Omega)}^2\leq \sum_{l\in L}|c_l|^2\leq B \|f\|_{L^2(\Omega)}^2
\end{equation} 
for some constants $0<A\leq B < \infty$. In this case, there exists a (unique) dual Riesz basis $\{g_l(x)\}_{l\in L}\subset L^2(\Omega)$ that satisfies the biorthogonality condition
\begin{align}
\ip{e_l}{g_{l'}}=\begin{cases} |\Omega| &\text{ if }l=l'\\  0& \text{ otherwise},\end{cases}\labeleq{dualdef}
\end{align}
and the coefficients in (\ref{def-rb1}) are given by $c_l=\frac{1}{|\Omega|}\ip{f}{g_l}$.

Although no general proof of the statement exists, there are many cases where it is known that a set $\Omega$ admits a Riesz basis of exponential functions .  These cases include when $\Omega$ is a finite union of co-measurable intervals in $\RR$ or multi-rectangles in $\mathbb{R}^{d}$  \cite{Kozma2014, DeCarli2015} and when $\Lambda$ is a stable set of sampling for the Paley-Wiener space $PW_\Omega$  \cite{Bezuglaya1993, Jaffard1991, Lyubarskii1997, Seip1995}. It was recently established in \cite{Debernardi2019} that any convex polytope that is centrally symmetric and whose faces of all dimensions are also centrally symmetric admits a Riesz basis of exponentials. For more details on properties of families of exponentials, we refer to \cite{Young01}.

However, to the best knowledge of the authors, no explicit algorithms or formulas for the corresponding dual Riesz bases are available in the literature. As seen from~\eqref{def-rb1}, knowing the biorthogonal dual is important for the reconstruction of any function in $L^2(\Omega)$. One of the goals of this paper is to construct biorthogonal Riesz bases for $L^2(\Omega)$ for a class of domains $\Omega\subset \mathbb{R}^d$ of finite, positive Lebesgue  measure.

The interest in Riesz bases of exponentials stems partially from the Fuglede Conjecture \cite{Fuglede1974}, which asserts that the set of exponentials $\{e_l(x)\}_{l\in L}$ is an orthogonal basis for $L^2(\Omega)$ if and only if $\Omega$ tiles $\RR^d$ with respect to the discrete set $L\subset \RR^d$. In this case, the system of exponentials is self-dual, that is $e_l\equiv g_l$ for $l\in L$ and $A=B=\frac{1}{|\Omega|}$ in~\refeq{def-rb2}. The Fuglede conjecture has been disproved in both directions when $d\geq 3$ \cite{Fuglede1974, Matolcsi2005, Kolountzakis2006, Kolountzakis2006b} but remains open when $d=1,2$. It has recently been proved in \cite{Lev2019} that the Fuglede conjecture does hold for convex domains in all dimensions. Removing the rigidity imposed by orthonormality leads naturally to the problem of obtaining  Riesz bases of exponentials.

Let $\Lambda$ be a full lattice in $\RR^d$ and $k$ be a natural integer. We say that a Lebesgue measurable set $\Omega \subset \mathbb{R}^d$ of finite positive measure  is a $k$-tile for  $\Lambda$ (or multi-tiling subset of $\mathbb{R}^d$) if 
\begin{align*}
\sum_{l\in \Lambda}\chi_{\Omega}(x - l) = k \text{ for almost all }x\in \mathbb{R}^{d}.\end{align*}

When $\Omega\subset \RR^d$ is an {\it admissible} $k$-tile for a full lattice $\Lambda\subset \RR^d$ (defined in Remark \ref{rem:dual}),  then it admits a Riesz basis of exponentials  \cite{Cabrelli2018, Grepstad2014, Kolountzakis2013}. More specifically,  there exists a set of vectors $\{a_{s}\}_{s=1}^{k}\subset \mathbb{R}^d$  such that the exponentials 
\begin{align} \{ e_l(x)=e^{2\pi i l \cdot x}\}_{l\in L},\qquad L = \bigcup_{s=1}^k \Lambda^*+a_s \labeleq{rb}
\end{align}
form a Riesz basis for $L^2(\Omega)$,  where $\Lambda^*$ is the dual lattice of $\Lambda$.

The first goal of the paper, accomplished in Section \ref{sec:dual}, is to introduce a procedure for constructing the biorthogonal dual Riesz basis for any multi-tiling domain that is known to admit a Riesz basis of exponentials of the form \refeq{rb}. The second goal of this paper is to derive an iterative (and adaptive) algorithm for performing the pointwise reconstruction of functions $f\in L^2(\Omega)$ given {\it data} of the form
$\{\langle f,e_{l} \rangle\}_{l\in L}$. The algorithm is deterministic, it only involves inverting  1D Vandermonde systems, and it establishes a new framework for finding a set of vectors $\{a_s\}_{s=1}^k\subset \RR^d$ for which \refeq{rb} is guaranteed to be a Riesz basis with Riesz bounds that are quantifiable using analytical formulas for 1D Vandermonde matrices. 

As we will show in Section \ref{sec:sampalg}, this algorithm extends the results that recently appeared in \cite{Frederick2018} to more general multi-tiling sets. Compared to the iterative procedure in \cite{Behmard2002, Behmard2006}, our algorithm solves invertible 1D Vandermonde systems at each iteration. A related algorithm in \cite{Faridani1994} relies on the existence of solutions to a linear system. In contrast, the algorithm presented here chooses the lattice shifts to ensure invertibility. The algorithm introduced here more closely compares to the construction in \cite{Lyubarskii1997}, however instead of creating a $p$-th order sampling procedure at each iteration, a first-order sampling is used at each iteration.  Another line of investigation that compares to the present work is considered in \cite{Grochenig2001, Kammerer2019} where numerical sampling algorithms for multivariate trigonometric polynomials are used to derive approximations of infinite-dimensional bandlimited functions from nonuniform sampling.  Recent results in \cite{Kammerer2019} employ numerical methods for sampling along random rank-1 lattices  for  a  given  frequency  set  to  approximate  multivariate  periodic  functions,  focusing  on  the  error  in approximation.  In contrast, we formulate a deterministic algorithm and provide guarantees for exact reconstruction in cases where the domain $\Omega$ is fully known.

\section{Finding duals}\label{sec:dual}

The main result in this section offers an explicit construction of biorthogonal systems of exponentials corresponding to a class of multi-tiling sets. Throughout the paper, we let $\Lambda = M\ZZ^d$ be a full lattice generated by the basis vectors $M = [m_1, \hdots, m_n]\in \RR^{d\times d}$. The canonical dual lattice is $\Lambda^{*} = \{M^{-T}z, z\in \mathbb{Z}^d\}$ and $\Pi_{\Lambda}$ denotes the fundamental domain $\Pi_{\Lambda} =M \mathbb{T}^d$. 

Let $\Omega\subset\mathbb{R}^d$ be a measurable domain with $0<|\Omega|<\infty$ that is a $k$-tile for $\Lambda$. Then, there exists a partition $\Omega=\Omega_1\cup\Omega_2\cup\hdots \cup \Omega_k\cup E$, where $E$ is a set of measure zero, and $\Omega_j$ are mutually disjoint measurable sets that are each a fundamental domain of $\Lambda$ (Lemma 1 in \cite{Kolountzakis2013}). Let $\Lambda_x(\Omega):=\Lambda_x = \{\lambda \in \Lambda \mid x +\lambda \in \Omega\}$.
Denote the cardinality of the finite set $\mc{S}$ by $\#\mc{S}$. Then $\# \Lambda_{x} = k$ for almost every $x\in \Pi_{\Lambda}$ and we define the points $\{\lambda_r(x)\} \subset\Lambda$ to be the unique lattice points that satisfy
\begin{align}
x+\lambda_r(x)\in \Omega_r, \qquad 1\leq r\leq k
\labeleq{Lambdax}.
\end{align}
The mapping $\omega_r: \Pi_{\Lambda} \rightarrow \Omega_r$ given by $x \rightarrow x+\lambda_r(x)$ is then invertible. 

Our first main result is the following.

\begin{thm}\label{thm:dualbasis} Let $\Omega\subset\mathbb{R}^d$ be a $k$-tile for a full lattice  $\Lambda$, and define 	$\{\lambda_r(x)\}_{r=1}^k$ and $\{\omega_r(x)\}_{r=1}^k$ by \refeq{Lambdax}. If there exists a set of vectors $\{a_{s}\}_{s=1}^{k}\subset\mathbb{R}^d$ and positive constants $\alpha$ and $\beta$ such that the matrix function $V=V(x)\in \mathbb{C}^{k\times k}$ with entries  \begin{align}  (V(x))_{sr}=e^{-2\pi i \lambda_r(x) \cdot a_{s}}, \qquad 1\leq s,r\leq k, \labeleq{V}\end{align}
has uniformly bounded singular values, $0<\alpha<\sigma_{i}\(V(x)\)<\beta <\infty$, $1\leq i\leq d$, then the following two families of functions form a pair of  biorthogonal Riesz bases  for $L^2(\Omega)$:
\begin{align}\labeleq{bio-basis}
\{ e_l(x)\}_{l\in L},\qquad  \{ g_l(x)\}_{l\in L},\qquad L = \bigcup_{s=1}^k \Lambda^*+a_s 
 \end{align}
where, for $\lambda^*\in\Lambda^*$ and $1\leq s\leq k$,
\begin{align}
g_{\lambda^{*}+a_{s}}(x)=e_{\lambda^{*}+a_{s}}(x) \( k\sum_{r=1}^{k} {(V(\omega_r^{-1}(x)))_{sr}}(V(\omega_r^{-1}(x))^{-1})_{rs}\chi_{\Omega_r}(x)\).	\labeleq{hlambda}
\end{align}
Here $\chi_{\Omega_r}$ denotes the indicator function of $\Omega_r$.
\end{thm}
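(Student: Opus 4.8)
The plan is to reduce everything to a fiberwise statement in the variable $x\in\Pi_\Lambda$ about the matrices $V(x)$. First I would introduce the \emph{unfolding} map $\mathcal U\colon L^2(\Omega)\to L^2(\Pi_\Lambda,\CC^k)$, $(\mathcal Uf)(x)=\big(f(\omega_1(x)),\dots,f(\omega_k(x))\big)$. Each $\omega_r$ is a translation restricted to $\Pi_\Lambda$, hence measure preserving onto $\Omega_r$, and since $\Omega=\bigcup_{r=1}^k\Omega_r$ up to a null set, $\mathcal U$ is a unitary isomorphism: $\|f\|_{L^2(\Omega)}^2=\sum_r\int_{\Omega_r}|f|^2=\int_{\Pi_\Lambda}\sum_r|f(\omega_r(x))|^2\,dx=\|\mathcal Uf\|^2$. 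The one genuine computation at this stage is to push the exponentials through $\mathcal U$: writing $\omega_r(x)=x+\lambda_r(x)$ and using that $\lambda^*\cdot\lambda\in\ZZ$ whenever $\lambda^*\in\Lambda^*$ and $\lambda\in\Lambda$, one finds $e_{\lambda^*+a_s}(\omega_r(x))=e^{2\pi i(\lambda^*+a_s)\cdot x}\,e^{2\pi i a_s\cdot\lambda_r(x)}=e_{\lambda^*}(x)\,\big(e_{a_s}(x)\,\overline{(V(x))_{sr}}\big)$. Thus $\mathcal U e_{\lambda^*+a_s}$ is the vector field $x\mapsto e_{\lambda^*}(x)\,u_s(x)$, where $u_s(x)\in\CC^k$ is the $s$-th column of $U(x):=\overline{V(x)}^{\,T}\operatorname{diag}\!\big(e_{a_1}(x),\dots,e_{a_k}(x)\big)$.

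With this in hand the Riesz-basis property of $\{e_l\}_{l\in L}$ follows by combining two elementary facts. First, $\{|\Pi_\Lambda|^{-1/2}e_{\lambda^*}\}_{\lambda^*\in\Lambda^*}$ is an orthonormal basis of $L^2(\Pi_\Lambda)$. Second, $U(x)$ differs from $\overline{V(x)}$ only by a transpose and a unitary diagonal factor, so it has the same singular values as $V(x)$, which by hypothesis lie in $(\alpha,\beta)$ uniformly in $x$; hence $\{u_s(x)\}_{s=1}^k$ is a basis of $\CC^k$ with Riesz bounds $\alpha^2,\beta^2$ uniformly in $x$. Given $f$, for a.e.\ $x$ I would write $(\mathcal Uf)(x)=\sum_s\gamma_s(x)u_s(x)$ with $\gamma(x)=U(x)^{-1}(\mathcal Uf)(x)$ (measurable, square integrable, $\|\gamma(x)\|$ comparable to $\|(\mathcal Uf)(x)\|$), and then set $c_{\lambda^*+a_s}:=|\Pi_\Lambda|^{-1}\ip{\gamma_s}{e_{\lambda^*}}_{L^2(\Pi_\Lambda)}$. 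Parseval at the two levels gives $\sum_{l\in L}|c_l|^2=|\Pi_\Lambda|^{-1}\int_{\Pi_\Lambda}\|\gamma(x)\|^2\,dx$, which is sandwiched between $(\beta^2|\Pi_\Lambda|)^{-1}\|f\|^2$ and $(\alpha^2|\Pi_\Lambda|)^{-1}\|f\|^2$; reassembling (partial sums converge because of the uniform fiber bound, $\mathcal U$ is unitary) yields $f=\sum_l c_l e_l$, while invertibility of $U(x)$ gives uniqueness of the $c_l$. Hence $\{e_l\}_{l\in L}$ is a Riesz basis of $L^2(\Omega)$ with bounds $A=k/(\beta^2|\Omega|)$, $B=k/(\alpha^2|\Omega|)$ (recall $|\Omega|=k|\Pi_\Lambda|$); in particular it possesses a unique biorthogonal sequence, which is its dual Riesz basis.

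It remains to check that the $\{g_l\}$ of \refeq{hlambda} is precisely that biorthogonal sequence. Each $g_l$ is bounded---its modulus is at most $k/\alpha$, since $|(V)_{sr}|=1$ and $\|V^{-1}\|<1/\alpha$---so $g_l\in L^2(\Omega)$. To compute $\ip{e_{\lambda^*+a_s}}{g_{\mu^*+a_t}}$, I would split $\Omega=\bigcup_r\Omega_r$ and change variables $y=\omega_r(x)$, $x\in\Pi_\Lambda$, on each piece. Using the formula above for $e_{\lambda^*+a_s}(\omega_r(x))$ and its conjugate for $e_{\mu^*+a_t}(\omega_r(x))$, together with $\omega_r^{-1}(\omega_r(x))=x$, the $r$-th contribution to the integrand is $k\,e_{\lambda^*-\mu^*}(x)\,e_{a_s-a_t}(x)\,\overline{(V(x))_{sr}}\,|(V(x))_{tr}|^2\,\overline{(V(x)^{-1})_{rt}}$, and $|(V(x))_{tr}|=1$ collapses this to $k\,e_{\lambda^*-\mu^*}(x)\,e_{a_s-a_t}(x)\,\overline{(V(x))_{sr}(V(x)^{-1})_{rt}}$. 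Summing over $r$ and using $\sum_r (V(x))_{sr}(V(x)^{-1})_{rt}=\delta_{st}$, then integrating over $\Pi_\Lambda$ and using $\int_{\Pi_\Lambda}e_{\lambda^*-\mu^*}=|\Pi_\Lambda|\,\delta_{\lambda^*\mu^*}$ (when $s=t$ the factor $e_{a_s-a_t}$ equals $1$), gives $\ip{e_{\lambda^*+a_s}}{g_{\mu^*+a_t}}=k|\Pi_\Lambda|\,\delta_{st}\,\delta_{\lambda^*\mu^*}$, i.e.\ $|\Omega|$ when the indices agree and $0$ otherwise---exactly \refeq{dualdef}. By uniqueness of the biorthogonal sequence to the Riesz basis $\{e_l\}$, the system \refeq{hlambda} is its dual Riesz basis, which completes the proof. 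I expect the only real difficulty to be organizational rather than conceptual: keeping the linear-algebra bookkeeping through $\mathcal U$ consistent---rows versus columns of $V(x)$, the transposes and complex conjugates, and the two occurrences of the cancellation $e^{2\pi i\lambda^*\cdot\lambda}=1$ (once for $\mathcal U e_l$, once in the change of variables $y=\omega_r(x)$)---so that the matrix entries end up in exactly the positions displayed in \refeq{hlambda}.
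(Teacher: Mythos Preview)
Your proof is correct and rests on the same fiberwise reduction to $V(x)$ that the paper uses; the unfolding map $\mathcal U$ is a clean repackaging of the paper's Poisson summation identity \refeq{veq}, and the singular-value hypothesis is exploited identically. The one organizational difference is that the paper first \emph{constructs} $g_l$ abstractly as the solution of the linear system \refeq{sys1}, checks biorthogonality by comparing two systems, and only afterward derives the explicit formula \refeq{hlambda}, whereas you take the formula as given and verify $\ip{e_{\lambda^*+a_s}}{g_{\mu^*+a_t}}=|\Omega|\delta_{st}\delta_{\lambda^*\mu^*}$ directly via the change of variables and the identity $\sum_r(V)_{sr}(V^{-1})_{rt}=\delta_{st}$; your route is slightly more economical and avoids the intermediate system-solving step.
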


\begin{proof}

Let $f\in L^2(\Omega)$. Then the mappings from $\ell^2\rightarrow L^2(\Pi_\Lambda)$ given by \[\{\langle f,e_l \rangle\}_{l\in \Lambda^*+a_s}\rightarrow \sum_{l\in \Lambda^*+a_s} \langle f, {e}_{l}\rangle  {e}_{l}(x)\]
are well-defined (in fact, isometries) for all $1\leq s\leq k$  by the orthogonality of the exponentials $\{e_l\}_{l\in \Lambda^*+a_s}$ in the space $L^2(\Pi_\Lambda)$. It follows from the Poisson summation formula that the sequences $\{\langle f,e_l \rangle\}_{l\in \Lambda^*+a_s}$ and $\{f(\omega_r(x))\}_{r=1}^k$ are related by the following set of linear equations for almost every $x \in \Pi_{\Lambda}$: 
\begin{align}
\sum_{l\in \Lambda^*+a_s} \langle f, {e}_{l}\rangle  {e}_{l}(x)= 
\text{vol}(\Lambda)\sum_{r=1}^k  (V(x))_{sr}{f}(\omega_r(x)) \labeleq{veq}, \qquad 1\leq s\leq k.
\end{align}
By assumption, the lattice shifts $\{a_s\}$ are chosen so that $V(x)$ is invertible, leading to the following $\ell_{2}$ estimate
\begin{align*}
\|V(x)^{-1}\|^{-2} \sum_{r=1}^{k}|f(\omega_r(x))|^{2}	 \leq  \sum_{s=1}^{k} |\sum_{l\in \Lambda^*+a_s} \langle f, {e}_{l}\rangle  {e}_{l}(x)|^{2} \leq \|V(x)\|^2\sum_{r=1}^{k}|f(\omega_r(x))|^{2}.
\end{align*}
Since both $\|V(x)\|=\max_{1\leq i\leq d} \sigma_{i}(V(x))<\beta$ and $\|V(x)^{-1}\|=1/\min_{1\leq i\leq d} \sigma_{i}(V(x))>1/\alpha$ are uniformly bounded, integrating over $\Pi_{\Lambda}$ produces

\begin{align}
A\|{f}\|^{2}_{L^{2}(\Omega)}	 \leq \sum_{l\in L}| \langle {f}, e_{l} \rangle|^{2} \leq   B \|f\|^{2}_{L^{2}(\Omega)},	\qquad L=\bigcup_{s=1}^k\Lambda^*+a_s,\labeleq{felbounds}
\end{align}
where $0<A\leq B<\infty$ are constants. 

To show (\ref{def-rb1}), let  $l=\eta^*+a_{s}$ for a fixed lattice point $\eta^*\in\Lambda^{*}$ and $1\leq s\leq k$ and define the functions $G_{l;r}(x), 1\leq r\leq$ to be the unique solutions to the system
\begin{align}
\begin{pmatrix}0 \\ \vdots\\  {e}_{l}(x) \\\vdots\\
0 \end{pmatrix} &=\frac{1}{k} V(x)\begin{pmatrix} G_{l;1}(x)\\ \vdots\\ 
G_{l;r}(x)\\ \vdots\\ G_{l;k}(x)) \labeleq{sys1} \end{pmatrix}.
\end{align}
Since $\|(V(x))^{-1}\|$ is also uniformly bounded, $G_{l;r}$ are functions in $L^2(\Pi_\Lambda)$. Then the function uniquely defined by $g_{l}(\omega_r(x)) = G_{l;r}(x)$ is in  $L^2(\Omega)$ and solves \refeq{veq} for almost every $x\in \Pi_\Lambda$:

\begin{align}\frac{1}{|\Omega|} \begin{pmatrix}
{e_{a_{1}}(x)} & & & & 0\\
&\ddots & & & \\
& &{e_{a_{s}}(x)} & & \\
& & &\ddots & \\
0& & & &{e_{a_{k}}(x)} \\
\end{pmatrix}
\begin{pmatrix} \sum_{\lambda^* \in \Lambda^*} \langle g_{l}, {e}_{\lambda^*+a_1}\rangle  {e}_{\lambda^*}(x) \\ \vdots\\  \sum_{\lambda^* \in \Lambda^*} \langle g_{l}, {e}_{\lambda^*+a_{s}}\rangle  {e}_{\lambda^*}(x) \\\vdots\\
\sum_{\lambda^* \in \Lambda^*} \langle g_{l}, {e}_{\lambda^*+a_k}\rangle  {e}_{\lambda^*}(x) \end{pmatrix} &=\frac{1}{k} V(x)\begin{pmatrix} g_{l}(\omega_1(x))\\ \vdots\\ 
g_{l}(\omega_r(x))\\ \vdots\\ g_{l}(\omega_k(x))  \end{pmatrix}.\labeleq{sys2}
\end{align}
Here we used that $\text{vol}(\Lambda)=\frac{|\Omega|}{k}$ because $\Omega$ is a $k$-tile for $\Lambda$. Since the right hand side of the equations \refeq{sys1} and \refeq{sys2} are equal, we can  equate the left hand side of both equations to obtain

\begin{align*}
\frac{1}{|\Omega|}\sum_{\lambda^* \in \Lambda^*}\langle g_{l}, {e}_{\lambda^*+a_{s'}}\rangle  {e}_{\lambda^*}(x)= \begin{cases} {e}_{\eta^*}(x), & s=s'\\
0, & s\neq s'\end{cases}.
\end{align*}  
 Since $\{e_{\lambda^*}\}_{\lambda^*\in \Lambda^*}$ form an orthogonal basis for $L^2(\Pi_\Lambda)$, this implies that $\langle g_{\eta^*+a_{s}}, {e}_{\lambda^*+a_{s'}}\rangle=0$ for $\lambda^*+a_{s'}\neq \eta^*+a_{s}$ and $|\Omega|$ otherwise. Therefore there is a unique sequence $\{g_{l}\}_{l\in L}\subset L^2(\Omega)$ satisfying \refeq{dualdef} and subsequently all functions $f\in L^2(\Omega)$ have the unique representation (\ref{def-rb1}), with $c_l=\langle f, g_l\rangle$. Then $\sum_{l\in L}|\langle f, g_l\rangle|^2= \sum_{l\in L}|\langle f, \sum_{l'\in l}\langle g_l, e_l'\rangle e_l'\rangle|^2 = \sum_{l\in L}|\langle f, e_l\rangle|^2$. By \refeq{felbounds}, the family $\{e_l\}_{l\in L}\subset L^2(\Omega)$ satisfies the frame condition \refeq{def-rb2}. We have shown that $\{e_l\}_{l\in L}$ is a Riesz basis of exponentials. 
 
 To find an explicit formula for the dual Riesz basis $\{g_l\}_{l\in L}$, the system \refeq{sys1} can be explicitly solved using only the ${s}^\text{th}$ column of the matrix $(V(x))^{-1}$:
\begin{align*}
k (V(x)^{-1})_{rs}{e}_{l}(x)& ={g_{l}}(\omega_r (x)), \quad 1\leq r\leq k, \text{ a.e. } x \in \Pi_{\Lambda}.
\end{align*}
Since ${e}_{l}(\omega_r(x) )=e^{2\pi i l\cdot (\lambda_r(x) +x)}=e_{l}(x) e^{2\pi i l\cdot \lambda_r(x)} =e_{l}(x) \overline{(V(x))_{sr}}$, we can write this as \begin{align}
k (V(x))_{sr} (V(x)^{-1})_{rs} {e}_{l}(\omega_r(x))& ={g_{l}}(\omega_r(x) ), \quad 1\leq r\leq k, \text{ a.e. } x \in \Pi_{\Lambda} \labeleq{geta}.
\end{align}
The functions in \refeq{hlambda} are obtained by extending the domain in \refeq{geta} to all $x\in \Omega$ using the inverse functions $\omega_r^{-1}(x)=x-\lambda_r(x)$ for $x\in \Omega_r$.

\end{proof}

\begin{remark}\label{rem:dual}
\begin{enumerate}
\item This result holds for all multi-tiling sets for which there exists a Riesz basis of exponentials of the form \refeq{rb}, including \textit{admissible} multi-tiling sets. A multi-tiling set for a full lattice $\Lambda$ is admissible if there exists an element of the dual lattice $v\in \Lambda^*$ and an integer $n\in\ZZ_+$ such that $v\cdot \lambda_1(x), \hdots, v\cdot \lambda_k(x)$ are distinct integers \textit{modulo} $n$ for almost every $x$ \cite{Grepstad2014, Kolountzakis2013, Cabrelli2018, Cabrelli2020}.
\item For $k=1$ the theorem implies that $g_l(x)=e_l(x)\chi_{\Omega}(x)=e_l(x)$ and the system is an orthogonal basis.
\item If the system is self-dual, that is $g_l=e_l$, then \refeq{geta} implies that  $(V(x)^{-1})_{rs}=\frac{1}{k} e^{2\pi i \lambda_r(x)\cdot a_{s}}$. Therefore, Theorem \ref{thm:dualbasis} shows that the system \refeq{bio-basis} forms an orthogonal basis if and only if  $V(x)^*V(x)=kI$ meaning that $V(x)$ is a (log) Hadamard matrix \cite{KolMat06}.

\end{enumerate}
\end{remark}

\section{Finding Riesz bases of exponentials}\label{sec:sampalg}

Suppose  $\Omega\subset\RR^d$ is a $k$-tile for the full lattice $\Lambda=M\ZZ^d$. Theorem \ref{thm:dualbasis} provides sufficient conditions on the set of {\it lattice shifts} $\{{a}_{s}\}_{s=1}^{k}\subset{\RR^{d}}$ so that the two families of functions defined in \refeq{bio-basis} form a pair of biorthogonal Riesz bases for $L^2(\Omega)$. These conditions are based on the uniform bound on the singular values of the $k\times k$ matrices $V(x)$ defined in \refeq{V}. For admissible domains $\Omega,$ the lattice shifts can be chosen so that the matrices $V(x)$ are square Vandermonde matrices $(V(x))_{rs}=w_{r}^{s-1}$ in which the nodes $w_{r}=e^{2\pi i (\lambda_r(x)\cdot v)/n}$ are a subset of the $n^\text{th}$ roots of unity $\{w_{r}\}_{r=1}^{k}\subset \{e^{2\pi i j/n}\}_{j=1}^{n}$ for almost every $x$. Since the distance between any two nodes has a uniform lower bound, that is, $\underset{r\neq r'}{\max}|w_r - w_{r'}|\geq |1-e^{2\pi i/n}|>0$, it follows from \cite{Gautschi1990} that the matrices $V(x)$ have uniformly bounded singular values for almost every $x\in \Pi_\Lambda$.

In general, determining the invertibility of Fourier matrices is challenging, especially when $d$ is large. Even when invertibility is guaranteed, directly solving the linear system \refeq{veq} becomes increasingly difficult as $k$ and $d$ grow. In this section,  we will discuss a strategy for reconstructing functions using a family of exponentials and finding a Riesz basis of exponentials for a class of multi-tiles.

To keep the notations simple, we collect in Table \ref{table:los} a list of symbols used in this section.

For $x\in \Pi_\Lambda$ define the {\it frequency set} $\mc{M}^d = \mc{M}^d (x) = \Lambda_x $. Since the {\it shift index set} $\mc{K}^d(\mc{M}^d)$ is defined recursively, we first prove the following lemma.

\begin{table}\caption{List of Symbols used in $\S$\ref{sec:sampalg}.}\label{table:los}
\centering
\begin{threeparttable}[t]
\renewcommand{\arraystretch}{1.5}
\begin{tabular}{rl }
\toprule
{$\mc{M}^d$}&{A given set of $k$ vectors $m_1,\hdots m_k$ in $\RR^d$}\\[.5em]
{$\mathcal{M}^{l}$}&$=\{m_i\in \RR^l \mid (m_i,m')\in \mathcal{M}^d \text{ for some } m'\in \RR^{d-l}  \}$\tnote{1}\\[.5em]
{$\mathcal{Z}^{l}_i$}&$=\begin{cases} \mc{M}^1 & l=i=1 \\ \{z_{l}\in \RR \mid (m_i, z_{l}) \in \mathcal{M}^{l}\}& l\geq 2,\end{cases}$\\[1.5em]

$\mc{M}^{l-1}_i$&{$=\{m'_{j} \in \mc{M}^l \mid j\geq i\}$\qquad $l\geq 2$}\\[.5em]
{$\mathcal{Q}_{i}(\mathcal{M}^{l})$}&{$=\begin{cases}\{ 0, \hdots, \#\mathcal{Z}^{l}_{i}- 1 \}& i=1\\
	 \{ \#\mathcal{Z}^{l}_{i-1}, \hdots, \#\mathcal{Z}^{l}_{i}- 1 \}& i\geq 2 \end{cases}$ } \\[1.5em]
	 
$\mc{K}^{l}(\mathcal{M}^{l})$&$=\begin{cases}
	\{0,\hdots, \#\mathcal{M}^{1}-1\}, & l=1\\  \bigcup_{i=1}^{\#\mathcal{M}^{l-1}}  \mc{K}^{l-1}(\mc{M}^{l-1}_i) \times \mathcal{Q}_{i}(\mathcal{M}^{l})& l\geq 2\end{cases}$\\[1.5em]
\bottomrule
\end{tabular}

\begin{tablenotes}
\item[1] {\footnotesize $\mathcal{M}^{l-1}=\{m_{i}\}$ is enumerated so that $\#\mathcal{Z}^{l}_{i} \leq\#\mathcal{Z}^{l}_{j}$ for $i\leq j$.}
\end{tablenotes}

\end{threeparttable}%

\end{table}

\begin{lemma} For each $l=1, \hdots d$, 
$\# \mc{K}^{l}(\mathcal{M}^{l}) = \# \mathcal{M}^{l}$.\label{lem:shift}
\end{lemma}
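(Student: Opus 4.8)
The plan is to prove the identity $\#\mc{K}^{l}(\mc{M}^{l}) = \#\mc{M}^{l}$ by induction on $l$, following exactly the recursive structure of the definitions in Table~\ref{table:los}. The base case $l=1$ is immediate: by definition $\mc{K}^{1}(\mc{M}^{1}) = \{0,\hdots,\#\mc{M}^1-1\}$, which has cardinality $\#\mc{M}^1$.

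For the inductive step, assume the claim holds for $l-1$, i.e.\ $\#\mc{K}^{l-1}(\mc{N}) = \#\mc{N}$ for every admissible frequency set $\mc{N}\subset\RR^{l-1}$ arising in the construction (in particular for each $\mc{M}^{l-1}_i$). Starting from the definition
\[
\mc{K}^{l}(\mc{M}^{l}) = \bigcup_{i=1}^{\#\mc{M}^{l-1}} \mc{K}^{l-1}(\mc{M}^{l-1}_i)\times \mc{Q}_i(\mc{M}^l),
\]
I would first argue that this union is disjoint, so that $\#\mc{K}^{l}(\mc{M}^{l}) = \sum_{i=1}^{\#\mc{M}^{l-1}} \#\mc{K}^{l-1}(\mc{M}^{l-1}_i)\cdot\#\mc{Q}_i(\mc{M}^l)$. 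Disjointness should follow because the index sets $\mc{Q}_i(\mc{M}^l)$ are, by their definition as consecutive blocks $\{\#\mc{Z}^l_{i-1},\hdots,\#\mc{Z}^l_i-1\}$ (with the convention for $i=1$), pairwise disjoint as the enumeration makes $\#\mc{Z}^l_i$ nondecreasing in $i$; thus the second coordinate already separates the pieces. Then $\#\mc{Q}_i(\mc{M}^l) = \#\mc{Z}^l_i - \#\mc{Z}^l_{i-1}$ (with $\#\mc{Z}^l_0 := 0$), and by the inductive hypothesis $\#\mc{K}^{l-1}(\mc{M}^{l-1}_i) = \#\mc{M}^{l-1}_i$.

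The heart of the argument is then a combinatorial/telescoping identity: I need to show
\[
\sum_{i=1}^{\#\mc{M}^{l-1}} \#\mc{M}^{l-1}_i \cdot \bigl(\#\mc{Z}^l_i - \#\mc{Z}^l_{i-1}\bigr) = \#\mc{M}^{l}.
\]
Here $\#\mc{M}^{l-1}_i = \#\{m'_j\in\mc{M}^{l-1}: j\ge i\} = \#\mc{M}^{l-1} - (i-1)$ by the enumeration, so the left side is a weighted sum that telescopes by Abel summation; the key point is that $\#\mc{M}^l = \sum_{i} \#\mc{Z}^l_i$ because each element of $\mc{M}^l$ projects to a unique $m_i\in\mc{M}^{l-1}$ and, for that $m_i$, the fiber over it has size $\#\mc{Z}^l_i$, so $\mc{M}^l$ is partitioned according to its $\RR^{l-1}$-projection into fibers of sizes $\#\mc{Z}^l_1,\hdots,\#\mc{Z}^l_{\#\mc{M}^{l-1}}$. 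Carrying out the Abel summation using $\#\mc{M}^{l-1}_i - \#\mc{M}^{l-1}_{i+1} = 1$ collapses the weighted sum to $\sum_i \#\mc{Z}^l_i = \#\mc{M}^l$, completing the induction.

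I expect the main obstacle to be bookkeeping rather than conceptual depth: one must be careful that the enumeration hypothesis ($\#\mc{Z}^l_i$ nondecreasing in $i$) is what legitimizes writing $\mc{Q}_i(\mc{M}^l)$ as a genuine partition of $\{0,\hdots,\#\mc{Z}^l_{\#\mc{M}^{l-1}}-1\}$ into consecutive blocks, and that the decomposition $\mc{M}^l = \bigsqcup_i (\text{fiber over } m_i)$ is exactly matched, index by index, with the terms of the union defining $\mc{K}^l$. Getting the $i=1$ boundary convention ($\mc{Q}_1$ starting at $0$, $\#\mc{Z}^l_0:=0$) consistent throughout, and making sure the inductive hypothesis applies to each $\mc{M}^{l-1}_i$ viewed as a frequency set in its own right, are the places where care is needed.
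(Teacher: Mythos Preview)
Your proposal is correct and follows essentially the same argument as the paper: induction on $l$, using disjointness of the $\mc{Q}_i(\mc{M}^l)$, the identity $\#\mc{M}^{l-1}_i = \#\mc{M}^{l-1}-i+1$, and the Abel/telescoping summation $\sum_i \#\mc{M}^{l-1}_i(\#\mc{Z}^l_i-\#\mc{Z}^l_{i-1}) = \sum_i \#\mc{Z}^l_i = \#\mc{M}^l$. Your write-up is in fact a bit more explicit than the paper's about the disjointness step and the fiber decomposition $\#\mc{M}^l=\sum_i\#\mc{Z}^l_i$, but there is no substantive difference in strategy.
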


\begin{proof} By induction. This is true for $l=1$ by definition.
Consider $1<l\leq d$. The induction assumption asserts that there are $\# \mathcal{M}^{l'}$ elements in $\mc{K}^{l'}(\mathcal{M}^{l'})$ for $1\leq l'<l$. Therefore there are $\# \mathcal{M}^{l-1}$ elements in $\mc{K}^{l-1}(\mc{M}^{l-1})$ and for $i>1$, $\# \mathcal{M}^{l-1}  - i+1 = \# \mathcal{M}^{l-1}_i=\#\mc{K}^{l-1}(\mathcal{M}^{l-1}_i)$. Then, there are $\#\mathcal{Z}^{l}_{1}$ in $\mc{Q}_{1}(\mathcal{M}^{l})$ and for $i>1$, $(\#\mathcal{Z}^{l}_{i}-\#\mathcal{Z}^{l}_{i-1})$ elements in $\mc{Q}_{i}^{l}(\mathcal{M}^{l})$.
Summing this up,
\begin{align*}\#\mc{K}^{l}(\mc{M}^l) & = \#\mc{M}^{l-1} \#\mc{Z}^{l}_1 + \sum_{i=2}^{\#\mc{M}^{l-1}}( \#\mc{M}^{l-1} - i+1)(\#\mc{Z}^{l}_{i}-\#\mc{Z}^{l}_{i-1})\\& =\sum_{i=1}^{\#\mc{M}^{l-1}} \#\mc{Z}^{l}_{i} = \#\bigcup_{i=1}^{\# \mc{M}^l}\{(m_i, z_l)\mid z_l\in\mc{Z}^l_i\}=\# \mathcal{M}^{l}.
\end{align*}

\end{proof}

The construction of the shift index set $\mc{K}^d(\mc{M}^d)$ is based on a tree structure admitted by the frequency  set $\mc{M}^d$.
The sets $\mc{M}^l$ correspond to the collection of vectors in each parent node in the $l^{th}$ level of the tree. The ordering $\mc{M}^{l-1}=\{m_i\}$ corresponds to nodes in increasing order of the number of immediate children. The sets $\mc{Z}^l_p$ correspond to the last coordinate of the children of the $p^{th}$ vector in $\mc{M}^{l-1}$.

\begin{example} \label{ex:z4}
Let $k=10$ and consider the frequency set
\begin{align*}
\mathcal{\mathcal{M}}^4 = \left\{
\begin{tabular}{cccc}
(1,1,1,1), &(2,1,1,1), & (3,1,1,1), & (4,1,1,1),\\
&(2,2,1,1), &(3,2,1,1), & (4,2,1,1),\\
&(2,2,1,2), &  (3,2,2,1), & (4,3,1,1)
\end{tabular}\right\}.
\end{align*}
The tree diagram produced by $\mc{M}^4$ is illustrated in Figure \ref{fig:ex4}.

\begin{figure}[ht]
\includegraphics[width=\linewidth]{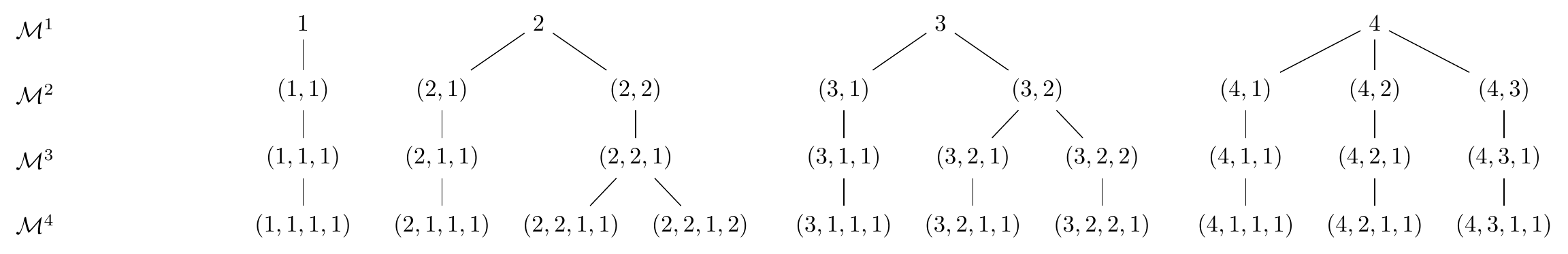}
\caption{Tree structure produced by $\mc{M}^4$ in Example \ref{ex:z4}.}\label{fig:ex4}\end{figure}

\end{example}

\subsection{Finding Riesz bases of exponentials}

We first define a weaker notion of admissibility.

\begin{definition}\label{def:weakadmi}
Let $\Omega\subset\RR^d$ be a $k$-tile for a full lattice $\Lambda$. 
For $v\in\mathbb{R}^{d}$ and $q\in \ZZ_{+}^{d}$ we say that $\Omega$ is {\it weakly} $   (v,q)${\it-admissible} if for almost every $x\in \Pi_{\Lambda}$ and $\mc{M}^{d}=\mc{M}^{d}(x) = \Lambda_{x}$, the following condition is satisfied:
\[\#\{v_{l} z_{l}\mod q_l \mid z_{l}\in \mc{Z}^{l}_p\} =  \#\mc{Z}^l_p,\quad  1\leq p \leq N_{l-1},\quad  1\leq l\leq d,\]
where $N_0=1$ and $N_l=\#\mc{M}^{l}$ if $1\leq l \leq d$.
\end{definition}
These conditions state that the numbers $v_l z_l$ are distinct modulo $q_l$. In particular, any bounded $k$-tile for a lattice $\Lambda$ is weakly admissible for some pair $(v,q)$. Weakly admissible domains are not necessarily admissible as defined in Remark \ref{rem:dual}.

\begin{thm}\label{thm:vander}
Suppose that $\Omega\subset \RR^d$ is a  $k$-tile for a full lattice $\Lambda$ that is weakly $(v,q)$-admissible for some $q\in \ZZ_+^{d}$ and  $v\in \RR^{d}$. Then, for almost every $x\in\Omega$, $f(x)$ can be uniquely determined from the data
\begin{align*}
\{ \langle f, e_{l}\rangle\}_{l\in L},\qquad  L= \bigcup_{s=1}^k \Lambda^*+a_s,
\end{align*}
where the lattice shifts are given by
\begin{align}
\{ a_s(x) = \delta j_s + \eta \mid  j_s\in \mc{K}^d(\mc{M}^d(x))\}\labeleq{shiftset}, \qquad
\end{align}
for any diagonal matrix $\delta\in \RR^{d\times d}$ of the form $\delta_{ll}=v_{l}/{q_{l}}$, $1\leq l \leq d$ and $\eta$ is any point in the dual lattice $\Lambda^*$.

\end{thm}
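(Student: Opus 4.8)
The plan is to reduce the theorem to the statement that the $k\times k$ matrix $V(x)$ of \refeq{V} built from the shifts \refeq{shiftset} is invertible for almost every $x$, and then to prove that invertibility by an induction on the dimension $d$ that parallels Lemma \ref{lem:shift}. For the reduction: by \refeq{veq} the data $\{\langle f,e_l\rangle\}_{l\in L}$ determines, for almost every $x\in\Pi_\Lambda$ and each $1\le s\le k$, the quantity $G_s(x):=\sum_{l\in\Lambda^*+a_s}\langle f,e_l\rangle e_l(x)=\mathrm{vol}(\Lambda)\sum_{r=1}^k(V(x))_{sr}f(\omega_r(x))$; if $V(x)$ is invertible we recover the vector $(f(\omega_r(x)))_{r=1}^k$, and since each $\omega_r:\Pi_\Lambda\to\Omega_r$ is a bijection and $\Omega=\Omega_1\cup\dots\cup\Omega_k$ up to a null set, this determines $f$ almost everywhere on $\Omega$. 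Moreover $\lambda_r(x)\in\Lambda$ and $\eta\in\Lambda^*$ give $e^{-2\pi i\lambda_r(x)\cdot\eta}=1$, so I may take $\eta=0$ and write $(V(x))_{sr}=\prod_{l=1}^d\zeta_l^{(\lambda_r(x))_l(j_s)_l}$ with $\zeta_l:=e^{-2\pi i v_l/q_l}$, rows indexed by $\mc{M}^d(x)=\{m_1,\dots,m_k\}$ and columns by $\mc{K}^d(\mc{M}^d(x))=\{j_1,\dots,j_k\}$ (a genuine square matrix by Lemma \ref{lem:shift}).

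I would then prove, by induction on $d$, the purely combinatorial claim that whenever a finite set of vectors satisfies the conditions of Definition \ref{def:weakadmi} the matrix just described is invertible; the theorem is then the case $\mc{M}^d(x)=\Lambda_x$, valid for almost every $x$. For $d=1$ it is an ordinary Vandermonde matrix whose nodes $\zeta_1^{m_r}$ are pairwise distinct precisely because $v_1z\bmod q_1$, $z\in\mc{M}^1$, are distinct, hence invertible. For the inductive step I would use the tree decomposition of $\mc{M}^d(x)$ by its first $d-1$ coordinates: group the unknowns $f(\omega_r(x))$ by their parent $m'_p\in\mc{M}^{d-1}$ into blocks $F_p$ of length $\#\mc{Z}^d_p$ and the equations $G_s(x)$ by the last shift coordinate $(j_s)_d\in\mc{Q}_{i(s)}(\mc{M}^d)$. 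One then shows the system is solved in $N:=\#\mc{M}^{d-1}$ rounds, the $p$-th round using the equations with $(j_s)_d\in\mc{Q}_p(\mc{M}^d)$: after moving the contributions of the already-found $F_1,\dots,F_{p-1}$ to the known side, what remains is, for each fixed such last coordinate $u$, a square $(d-1)$-dimensional system of the form \refeq{V} for the sub-collection $\mc{M}^{d-1}_p$, invertible by the inductive hypothesis, whose solution yields the weighted partial sums $h_u(x;m'_{p'})=\sum_{z\in\mc{Z}^d_{p'}}\zeta_d^{zu}f(\omega(x;(m'_{p'},z)))$ for all $p'\ge p$; collecting these over rounds $1,\dots,p$ supplies $h_u(x;m'_p)$ for exactly the consecutive powers $u=0,\dots,\#\mc{Z}^d_p-1$, and one last Vandermonde solve with the distinct (again by Definition \ref{def:weakadmi}) nodes $\{\zeta_d^z:z\in\mc{Z}^d_p\}$ recovers $F_p$. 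Chaining the rounds recovers $(f(\omega_r(x)))_{r=1}^k$. This is also the promised iterative reconstruction procedure — each round is a batch of 1D Vandermonde solves.

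The hard part will be the combinatorial bookkeeping in the inductive step: verifying that the recursive definitions of $\mc{Q}_i$ and $\mc{K}^l$ make the blocks $F_p$ become solvable in the order $p=1,2,\dots,N$ without circular dependence, that after deleting the known terms each round is genuinely a square system of the form \refeq{V} for $\mc{M}^{d-1}_p$, and that the partial sums gathered over the first $p$ rounds furnish precisely the powers $u=0,\dots,\#\mc{Z}^d_p-1$ — all the dimension counts being controlled by Lemma \ref{lem:shift}. One must also check that Definition \ref{def:weakadmi} descends to the sub-collections $\mc{M}^{d-1}_p$, which is immediate since the condition is imposed on every $\mc{Z}^l_p$ at every level and passes to subsets. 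Finally, for bounded $\Omega$ the nodes appearing throughout lie in a fixed finite subset of the unit circle with a positive uniform lower bound on pairwise distances (when $v\in\ZZ^d$, among $q_l$-th roots of unity), so Gautschi-type bounds \cite{Gautschi1990} give uniform two-sided control of all the Vandermonde inverses, hence of $\|V(x)\|$ and $\|V(x)^{-1}\|$, and Theorem \ref{thm:dualbasis} then upgrades the conclusion to: \refeq{bio-basis} is a pair of biorthogonal Riesz bases for $L^2(\Omega)$.
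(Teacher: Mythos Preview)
Your proposal is correct and takes essentially the same approach as the paper: both reduce to the invertibility of $V(x)$, discard $\eta$ via $\lambda_r(x)\cdot\eta\in\ZZ$, and establish invertibility by an induction on the coordinate level that factors each step into lower-dimensional systems on the tails $\mc{M}^{l-1}_p$ (handled by the inductive hypothesis) followed by 1D Vandermonde solves whose nodes are distinct by weak admissibility --- the paper records this as the explicit block lower-triangular factorization $V^l=[[V^{l-1}]]\,U\,W^l$, which is exactly the matrix encoding of your round-by-round elimination. Your closing paragraph, however, reaches beyond Theorem~\ref{thm:vander} into Corollary~\ref{cor:Riesz}: the paper explicitly cautions that weak admissibility alone does not place $g_l$ in $L^2(\Omega)$, and since the shift set \refeq{shiftset} depends on $x$ through $\mc{K}^d(\mc{M}^d(x))$, passing to a single global $L$ and invoking Theorem~\ref{thm:dualbasis} requires the further hypothesis of strong admissibility (nodes among $q_l$-th roots of unity), not merely boundedness of $\Omega$.
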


\begin{proof} 
Consider the system of equations \refeq{veq} given by $(F_{j_s}(x))_{s=1}^k=V(x) (F^{\lambda_r}(x))_{r=1}^k$ for almost every $x \in \Pi_{\Lambda}$, where $V(x)=V$ has the form \refeq{V} and
\begin{align}
F_{j_s}(x)&=F_{j_s}=\frac{1}{\text{vol}(\Lambda)}\sum_{\lambda^{*}\in \Lambda^{*}} \langle f, {e}_{\lambda^{*}+a_{s}}\rangle{e}_{\lambda^{*}+a_{s}}(x),\labeleq{Faj}\\ 
{F}^{\lambda_{r}}(x)
&={F}^{\lambda_r}=  {f}\left(\omega_r(x)\right)\labeleq{Fr}.
\end{align}
Notice that $e^{2\pi i \lambda_r(x)\cdot(\delta j_s + \eta)} = e^{2\pi i \lambda_r(x)\cdot \delta j_s}$ so it suffices to consider shifts of the form $a_s=\delta j_s$.
Since for almost every $y\in \Omega$, $y=\omega_r(x)$ for some $x\in \Pi_\Lambda$ and $1\leq r\leq k$, we will prove the unique recovery of $f(x)$ by showing the invertiblity of the matrices $V^{l}(x)=V^l$, $1\leq l\leq d$, defined by
\begin{align}
(V^{l})_{sp}=
e^{-2\pi i \delta'' j''_{s}\cdot m_{p}},\qquad j''_s\in \mc{K}^l(\mc{M}^l), \quad m_p\in \mc{M}^l. \labeleq{Vlinvertible}
\end{align}
Here, $\mc{K}^{l}(\mc{M}^{l})$ is the set of $\#\mc{M}^l$ shift indices constructed in Lemma \ref{lem:shift}, and we define the diagonal matrices $\delta'=(\delta_{ij})_{ l+1\leq i, j\leq d}$ and  $\delta''=(\delta_{ij})_{ 1\leq i, j\leq l}$. This is equivalent to proving the uniqueness of solutions of the linear systems
\begin{align}
\( F_{(j_{s}'',j')}\)_{j_{s}''\in\mc{K}^{l}(\mc{M}^{l})} & = V^{l} \(F^{m_{p}}_{j'}\)_{\substack{m_{p} \in \mc{M}^l}} \labeleq{Fjl}
\end{align}
 for all $1\leq l \leq d$ where for a fixed $j'\in \RR^{d-l}$ we define	
\begin{align*}
F^{m_{p}}_{j'} &=  \sum_{\substack{m'\in\RR^{d-l}:\\(m_{p},m')\in\mc{M}^d }} F^{(m_{p},m')}  e^{-2\pi i \delta' j' \cdot m'}.
\end{align*}
In the case $l=1$, $V^1=W^1_1$, where $W^1_1$ is a $\#\mc{M}^1\times \#\mc{M}^1$ Vandermonde matrix $(W^1_1)_{st}=w_{t}^{s-1}$ with nodes $w_{t}=e^{-2\pi i \delta_{11} z_t}$,
$ z_t\in\mc{M}^1$. By the admissibility condition, the set $\{v_1 z_1 \mod q_1\mid z_1\in\mc{Z}^1_1\}$ contains $\#\mc{Z}^1_1$ distinct numbers and therefore the nodes $w_{t}$ are distinct, and $W^1_1$ is invertible for $\delta_{11}=v_1/q_1$.

For $l\geq 2$ and $p=0,\hdots, \#\mc{M}^{l-1}$ define the 1D Vandermonde matrices $\tilde{W}^{l}_{(q,p)}\in \mathbb{C}^{\#\mc{Z}^l_p
\times \#\mc{Z}^l_q}$ for $q<p$ and ${W}^{l}_{(p,p)}\in \mathbb{C}^{\#\mc{Z}^l_p
\times \#\mc{Z}^l_p}$ by
\begin{align*}
\begin{cases} (\tilde{W}^{l}_{(q,p)})_{st} = e^{2\pi i(s-1) z_{t} \delta_{ll}} & z_{t}\in \mc{Z}^{l}_{q}\\
({W}^{l}_{(p,p)})_{st} = e^{2\pi i(s-1) z_{t} \delta_{ll}} & z_{t}\in \mc{Z}^{l}_{p},
\end{cases}, \qquad 1\leq s\leq \#\mc{Z}^l_{p}. \quad 
\end{align*}
Define the matrices $\tilde{V}^{l-1}_{p}\in \mathbb{C}^{\#\mc{M}^{l-1}_p\times \#\mc{M}^{l-1}_q}$ and ${V}^{l-1}_{p}\in \mathbb{C}^{\#\mc{M}^{l-1}_p\times \#\mc{M}^{l-1}_p}$
\begin{align*}
\begin{cases}
(\tilde{V}^{l-1}_{p})_{sq} = e^{-2\pi i m_{q} \cdot \delta''j_{s}''}, & q<p\\
({V}^{l-1}_{p})_{s(q-p+1)} = e^{-2\pi i m_{q} \cdot \delta''j_{s}''} ,\quad  &q\geq p
\end{cases}, \quad  j''_{s}\in \mc{K}^{l-1}(\mc{M}^{l-1}_{p}), \qquad m_q\in \mc{M}^{l-1}_{p}.
\end{align*}
Then, we can form the linear systems from \refeq{Fjl} as
\begin{align}
\left( F_{(j'', j_{l},j')}\right)_{j''\in \mc{K}^{l-1}(\mc{M}^{l-1}_{1})} &=	V^{l-1}_{1}\( F^{m_q}_{(j_{l},j')}\)_{m_q\in \mc{M}_1^{l-1}}, \qquad \qquad j_l\in \mc{Q}^l_1 \labeleq{fmp1},\\
\left( F_{(j'', j_{l},j')}\right)_{j''\in \mc{K}^{l-1}(\mc{M}^{l-1}_{p})} - \tilde{V}^{l-1}_{p}\( F^{m_{q}}_{(j_{l},j')}\right)_{q<p}&=	V^{l-1}_{p}\( F^{m_q}_{(j_{l},j')}\)_{m_q\in \mc{M}_p^{l-1}}, \qquad \qquad j_l\in \mc{Q}^l_p \labeleq{fmpl}, \quad p\geq 2.
\end{align}
The vector $\( {F}^{m_{p}}_{(j_{l}, j')}\)_{j_{l} \in\mc{Q}^{l}_{i}, 1\leq i\leq p}$, created from solutions to \refeq{fmp1} and \refeq{fmpl} when $V^{l-1}_p$ are all invertible, solves the system
					\begin{align}
\( {F}^{m_{p}}_{(j_{l}, j')}\)_{j_{l} \in\mc{Q}^{l}_{i}, 1\leq i\leq p} &=W^{l}_{(p,p)}\left(F^{(m_{p},z_{l})}_{j'}\right)_{{z_{l}\in\mc{Z}^l_{p}}}		\labeleq{fmjlfin},
\end{align}
where the square matrices $W^{l}_{(p,p)}$ are invertible again by weak admissibility. 

To write $\refeq{fmp1} - \refeq{fmjlfin}$ in matrix form, we define $[V^{l-1}_p]$, $[\tilde{V}^{l-1}_p]$ to be block diagonal matrices with $\#\mc{Q}^l_p$ block entries all equal to $V^{l-1}_p$ and $\tilde{V}_p^{l-1}$, respectively, and 
\[ [\tilde{W}^l_p]=\begin{pmatrix}\tilde{W}^{l}_{(1,p)} & & &0 \\ 
& \tilde{W}^{l}_{(2,p)}&  &   \\
&&\ddots&\\
0 & & & \tilde{W}^{l}_{(p-1,p)}
\end{pmatrix}, \qquad [{W}^l_p]=\begin{pmatrix}{W}^{l}_{(1,1)} & & &0 \\ 
& {W}^{l}_{(2,2)}&  &   \\
&&\ddots&\\
0 & & & {W}^{l}_{(p-1,p-1)}
\end{pmatrix}.\]
The block diagonal matrix $[{W}^l_p]$ is invertible, and for appropriate permutation matrices\footnote{$U_p$ and $\tilde{U}_p$ correspond to the mappings $U_p( (v)_{ q\geq i, j_l\in \mc{Q}^l_i})_{i<p}=(( v )_{z_l\in\mc{Z}^l_q})_{q<p}$ and $ \tilde{U}_p ( (v)_{j_l\in \mc{Q}^l_p})_{ q<p}= ((v)_{ q<p})_{j_l\in \mc{Q}^l_p}$. } $\tilde{U}_p$, and $U_p$, the matrix defined as \[{X}_p=[{\tilde{V}}^{l-1}_p] \tilde{U}_p [\tilde{W}^{l}_{p}]([{W}^l_p])^{-1} U_p,\] satisfies
\begin{align*}
{X}_p \( \(F^{m_{q}}_{(j_l,j')}\)_{ q\geq i, j_l\in \mc{Q}^l_i}\)_{i<p}= \( \tilde{V}^{l-1}_{p} \( F^{m_q}_{(j_l,j')}\)_{q<p}\)_{j_l\in \mc{Q}^l_p}.\end{align*}
%
Therefore  $\refeq{fmp1}-\refeq{fmpl}$ can be written as
\begin{align}
  \begin{pmatrix}
\left(F_{(j'', j_{l},j')}\right)_{j''\in \mc{K}^{l-1}(\mc{M}^{l-1}_{1}), j_l\in \mc{Q}^l_1}\\
\left(F_{(j'', j_{l},j')}\right)_{j''\in \mc{K}^{l-1}(\mc{M}^{l-1}_{2}), j_l\in \mc{Q}^l_2}
\\\vdots\\
\left(F_{(j'', j_{l},j')}\right)_{j''\in \mc{K}^{l-1}(\mc{M}^{l-1}_{p}), j_l\in \mc{Q}^l_p}\\
\vdots\\
\end{pmatrix} = \begin{pmatrix}[{V}^{l-1}_1] & &\hdots & &0 \\ 
{X}_2 &  [{V}^{l-1}_2]& & & \\
&&\ddots&&\vdots \\
{X}_p & & & [{V}^{l-1}_p]&  \\
\vdots & & & & \ddots
\end{pmatrix}  \begin{pmatrix}
\(F^{m_{q}}_{(j_l,j')}\)_{q\geq 1, j_l\in \mc{Q}^l_1}\\
\(F^{m_{q}}_{(j_l,j')}\)_{q\geq 2, j_l\in \mc{Q}^l_2}\\
\vdots\\
\(F^{m_{q}}_{(j_l,j')}\)_{q\geq p, j_l\in \mc{Q}^l_p}\\
\vdots
\end{pmatrix}.\labeleq{FVFl}
\end{align}
The solutions to \refeq{FVFl} can then be rearranged to form the linear system,
\begin{align}
\( F^{m_{p}}_{(j_{l},j')}\)_{m_{p}\in\mc{M}^{l-1},j_{l}\in\cup_{i=1}^p\mc{Q}^{l}_i} & = W^{l} \(F^{(m_{p},z)}_{j'}\)_{\substack{(m_{p},z) \in \mc{M}^l}}\labeleq{wl}
\end{align}
where $W^l=[W^l_{(\#\mc{M}^{l-1}, \#\mc{M}^{l-1})}]$ is an invertible matrix.

Putting it all together, for an appropriate permutation matrix $U$, and the block lower triangular matrix in \refeq{FVFl} denoted by $[[{V}^{l-1}]]$, it follows that the matrix $V^l$ in \refeq{Vlinvertible} can be expressed as $V^l=[[{V}^{l-1}]]UW^l$, and it is invertible provided that ${V}^{l-1}_{p}$ are all invertible. The case $l=d$ is proved in the same way, with modifications in notation, implying that the terms $\{ F^{\lambda_r}(x)\}_{\lambda_r=m\in \mc{M}^d(x)}$ are determined uniquely.  Then, $f(x)$ can be uniquely constructed as \begin{align*}
f(x)&=\sum_{r=1}^kf(x)\chi_{\Omega_r}(x) = \sum_{r=1}^k  F^{\lambda_r}(\omega_r^{-1}(x))\chi_{\Omega_r}(x)\\
&
=\frac{1}{\text{vol}(\Lambda)}  \sum_{s=1}^k \sum_{\lambda^{*}\in \Lambda^{*}}  \langle f, {e}_{\lambda^{*}+a_{s}}\rangle 
\left(\sum_{r=1}^k  (V(x)^{-1})_{rs} (V(x))_{sr}\chi_{\Omega_r}(x)\) {e}_{\lambda^{*}+a_{s}}(x).\\
\end{align*}
Note that this reconstruction has the form $f(x) = \sum_{l\in L}c_l g_l(x)$ with $c_l=\frac{1}{|\Omega|}\langle f, e_l\rangle$ and $g_l(x)$ is defined as in Theorem \ref{thm:dualbasis}, however, weak admissibility of $\Omega$ does not guarantee that $g_l$ is a function in $L^2(\Omega)$.

\end{proof}

The proof can be summarized in the following pointwise reconstruction algorithm for functions $f\in L^2(\Omega)$, where $\Omega$ is a weakly $(v,q)$-admissible domain. For almost every $x'\in\Omega$, there is a unique $r\in \{1,\hdots k\}$ so that $x'\in \Omega_r$, so it suffices to state the algorithm for recovering $f(\omega_r(x)), x\in \Pi_\Lambda$.

\begin{alg}\label{alg:samp} 
Define $\mc{M}^{d}(x)=\mc{M}^{d}= \Lambda_x $. Construct the lattice shift set $\{a_s\}_{s=1}^k$ given by \refeq{shiftset} for a fixed diagonal matrix $\delta\in \mathbb{R}^{d\times d}$ satisfying the assumptions of Theorem \ref{thm:vander}. 
\begin{enumerate}[label=\textbf{Step \arabic*:}]
\setlength{\itemindent}{.5in}
\item \label{alg:step1}  For each $j \in \mc{K}^{d}(\mathcal{M}^{d})$,  define $F_j=F_{j}(x)$ by \refeq{Faj}.
\item If $d=1$, solve the 1D Vandermonde system \refeq{Fjl} for $l=1$, obtaining $\(F^{m_{1}}\)_{m_{1}\in \mathcal{M}^{1}}$. Then, skip to Step 4. If $d\geq 2$, solve \refeq{Fjl} with $l=1$ for all $j''\in \mc{K}^1(\mc{M}^1)$ and $j_1'=0\in \ZZ^{d-1}$, obtaining $\(F_{j'_1}^{m_{1}}\)_{m_{1}\in \mathcal{M}^{1}}$.

\item For $l=2, \hdots, d-1$ and $\mc{M}^{l-1}=\{m_q\}$
\begin{enumerate}
    \item For $p=1$ repeat the $l-1$ iteration (Step 2 for $l=2$ or Step 3 for $l>2$), substituting $j'_{l-1}$ with $(j_l, j'_l)$ where $j_l \in \mc{Q}^l_1$ and $j'_l=0 \in\RR^{d-l}$ to obtain $\(F_{(j_l, j'_l)}^{m_q}\)_{m_q\in \mc{M}^{l-1}_1}$. Then, determine $\(F_{j'_l}^{(m_1, z_l)}\)_{z_l\in \mc{Z}^l_{1}}$ by solving the 1D Vandermonde system \refeq{fmjlfin}.
    
    \item For $p=2, \hdots, \#\mc{M}^{l-1}$. Define 
    \[\tilde{F}_{(j'',j_l, j')}:=
    \left( F_{(j'', j_{l},j')}\right)_{j''\in \mc{K}^{l-1}(\mc{M}^{l-1}_{p})} - \tilde{V}^{l-1}_{p}\( F^{m_{q}}_{(j_{l},j')}\right)_{q<p}\]
    and $\tilde{\mc{M}}^d=\{m\in \mc{M}^d\mid (m_1,\hdots, m_l)\in \mc{M}^l_p\}$. Repeat Step 3 a) substituting $F_j$ and $\mc{M}^d$ with $\tilde{F}_j$ and $\tilde{\mc{M}}^d$,
    obtaining  $\(F_{j'_l}^{(m_p, z_l)}\)_{z_l\in \mc{Z}^l_{p}}$.
    
\end{enumerate}

\item For $l=d$, perform Step 3 omitting $j'_d$ to obtain \[f(\omega_r(x)) = {f}(\lambda_r(x)+x)\}_{\lambda_{r}\in\Lambda_x}.\]

\end{enumerate}

\end{alg}

Although Theorem \ref{thm:vander} guarantees that the matrix $V(x)$ in \refeq{V} is invertible, Algorithm \ref{alg:samp} does not directly find the inverse. Instead, the algorithm iteratively solves the system in a block-by-block fashion, and therefore only involves the inversion of 1D Vandermonde matrices. Equations \refeq{fmp1} and \refeq{fmpl} are only solved directly in the case $l=1$.

\begin{remark} \label{rem:kol}

The proof of Theorem \ref{thm:vander}  provides an explicit procedure for recovering functions that arise in, for example, \cite{Kolountzakis2013}, in which the existence of the set of vectors $\{a_{s}\}_{s=1}^{k}\subset \mathbb{R}^{d}$ is proved by showing the existence of $\Lambda$-periodic functions $\tilde{f}_{s}\in L^{2}(\Pi_{\Lambda}), 1\leq s\leq k$, such that
\begin{align}
{f}({\lambda_{r}(x)}+x) =  \sum_{s=1}^{k}e^{2\pi i a_{s}\cdot (x - \lambda_{r}(x))}\tilde{f}_{s}(x), \qquad  1\leq r\leq  k,
\end{align}
and showing that the determinant of the matrix for this system as a function of $x$ has finitely many zeros. Theorem \ref{thm:vander} provides one way to intuitively choose the lattice shifts for certain domains $\Omega$ by setting $\{a_{s}=\delta j_{s}\}_{s=1}^{k}$, $j_{s}\in\mc{K}^d(\mc{M}^d)$ for a suitable matrix $\delta\in\mathbb{R}^{d\times d}$.

\end{remark}

Algorithm \ref{alg:samp} provides a procedure for the pointwise (and adaptive) reconstruction of functions in $L^2({\Omega})$ that does not rely on a Riesz basis of exponentials. In fact, it may not be known {\it a priori} if $V$ in \refeq{Fr} satisfies the assumptions of Theorem \ref{thm:dualbasis}. The factorization of $V$ in the proof of Theorem \ref{thm:vander} provides a systematic procedure for choosing the lattice shifts $\{a_s\}$ and estimating the Riesz bounds (when they exist).  The algorithm finds a Riesz basis of exponentials for the following subset of domains satisfying  Definition~\ref{def:weakadmi}.

\begin{definition}\label{def:strongadmi}
Let $\Omega\subset\RR^d$ be a $k$-tile for a full lattice $\Lambda$. 
For $v\in\mathbb{R}^{d}$ and $q\in \ZZ_{+}^{d}$ we say that $\Omega$ is {\it strongly} $   (v,q)$-{\it admissible} if it is weakly $(v,q)$-admissible and for almost every $x\in \Pi_{\Lambda}$ the sets $\{v_{l} z_{l}\mod q_l \mid z_{l}\in \mc{Z}^{l}_p\}, 1\leq p \leq N_{l-1},  1\leq l\leq d$  are all sets of integers.
\end{definition}
For example, any bounded multi-rectangle in $\RR^d$ of the form $\Omega = \cup_{r=1}^k(\Pi_\Lambda+\lambda_r)$ with $\{\lambda_r\}\subset \Lambda$ is strongly admissible. The advantage of strong admissibility is the uniform boundedness of $\|V(x)\|$ which we will show next leads to a Riesz basis of exponentials.

\begin{corollary}Any strongly $(v,q)$-admissible $k$-tile for a full lattice $\Lambda$ admits a Riesz basis of exponentials $\{e_l\}_{l\in L}$ of the form given in Theorem \ref{thm:vander}. 
\label{cor:Riesz}
\end{corollary}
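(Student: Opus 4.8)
The plan is to deduce Corollary~\ref{cor:Riesz} from Theorem~\ref{thm:dualbasis} by verifying that strong $(v,q)$-admissibility supplies the missing hypothesis there, namely uniform two-sided bounds on the singular values of the matrix $V(x)$ from \refeq{V}. Theorem~\ref{thm:vander} already tells us which lattice shifts to use: take $\{a_s(x) = \delta j_s + \eta \mid j_s \in \mc{K}^d(\mc{M}^d(x))\}$ with $\delta_{ll} = v_l/q_l$ and $\eta \in \Lambda^*$. With these shifts, the proof of Theorem~\ref{thm:vander} gives an explicit factorization $V(x) = [[V^{d-1}]]\, U\, W^d$ (iterated down through the levels $l = 1,\dots,d$), in which every factor is either a permutation matrix, a block-diagonal matrix whose blocks are $1$D Vandermonde matrices $W^l_{(p,p)}$ with nodes on the unit circle, or the block-lower-triangular matrix $[[V^{l-1}]]$ assembled from the lower-level factors and the off-diagonal coupling blocks $X_p = [\tilde V^{l-1}_p]\tilde U_p [\tilde W^l_p]([W^l_p])^{-1} U_p$.

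The key point is that each elementary building block has operator norm and inverse operator norm bounded by a constant that does \emph{not} depend on $x$. First I would observe that, because $\Omega$ is a \emph{bounded} $k$-tile (or more generally because $\Lambda_x$ ranges over a finite collection of $k$-subsets of $\Lambda$ as $x$ varies over $\Pi_\Lambda$), there are only finitely many possible values of the frequency set $\mc{M}^d(x)$, hence only finitely many possible tree structures, and hence only finitely many distinct matrices $V(x)$ arising for a.e.\ $x$; a finite set of invertible matrices trivially has uniformly bounded singular values. This already proves the corollary, but it is worth pinpointing why strong admissibility (rather than merely weak) is the natural hypothesis: under strong $(v,q)$-admissibility the nodes $w_t = e^{-2\pi i \delta_{ll} z_t} = e^{-2\pi i v_l z_l/q_l}$ appearing in every $1$D Vandermonde block are distinct \emph{$q_l$-th roots of unity}, so the pairwise node separation is bounded below by $|1 - e^{2\pi i/q_l}| > 0$ uniformly in $x$; by the classical Gautschi estimates \cite{Gautschi1990} (already invoked in Section~\ref{sec:sampalg}) the condition numbers of $W^l_{(p,p)}$ and $[W^l_p]$ are then bounded by a constant depending only on $(v,q)$ and $k$, giving genuinely \emph{uniform} — and in principle computable — Riesz bounds, not just finiteness. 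I would record both observations, leading with the finiteness argument for brevity and then noting the quantitative refinement.

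Concretely the steps are: (i) fix the shifts via \refeq{shiftset} and invoke Theorem~\ref{thm:vander} to get invertibility of $V(x)$ for a.e.\ $x$; (ii) note that boundedness of $\Omega$ forces $\{\mc{M}^d(x) : x \in \Pi_\Lambda\}$ to be finite up to the finitely many relevant subsets of $\Lambda$, so $\sup_x \|V(x)\| =: \beta < \infty$ and $\sup_x \|V(x)^{-1}\| =: 1/\alpha < \infty$; alternatively (ii$'$) propagate the uniform Gautschi bound on the $1$D Vandermonde blocks up through the factorization $V = [[V^{d-1}]]UW^d$ by induction on $l$, using submultiplicativity of the operator norm and the fact that permutation matrices are isometries, to get explicit $\alpha,\beta$ depending only on $(v,q,k,d)$; (iii) apply Theorem~\ref{thm:dualbasis} with these $\alpha,\beta$ to conclude that $\{e_l\}_{l\in L}$ with $L = \bigcup_{s=1}^k \Lambda^* + a_s$ is a Riesz basis of exponentials for $L^2(\Omega)$, with the biorthogonal dual given by \refeq{hlambda}.

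The only real obstacle is step~(ii$'$): bounding $\|X_p\|$ and $\|X_p^{-1}\|$ — or rather controlling how the coupling blocks $X_p$ enter the inverse of the block-lower-triangular matrix $[[V^{l-1}]]$ — requires a little care, since inverting a block-triangular matrix produces products of the $X_p$'s with the inverses of the diagonal blocks, so one must check that the number of such terms and their individual norms stay controlled as $l$ grows (they do, because the block sizes are bounded by $k$ and the depth by $d$). If one is content with the qualitative statement, step~(ii) sidesteps this entirely: finiteness of the set of matrices $\{V(x)\}$ is immediate and the corollary follows in one line from Theorem~\ref{thm:dualbasis}. I would present the short finiteness proof as the main argument and relegate the quantitative version to a remark, since the paper's stated aim of ``Riesz bounds that are quantifiable using analytical formulas for 1D Vandermonde matrices'' is exactly what step~(ii$'$) delivers.
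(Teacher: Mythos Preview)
Your main argument (ii) has a gap: the corollary does \emph{not} assume $\Omega$ is bounded, and for an unbounded strongly admissible $k$-tile there is no a priori reason for $\Lambda_x$ to range over only finitely many $k$-subsets of $\Lambda$ as $x$ varies, so the claim that there are only finitely many distinct matrices $V(x)$ is unjustified on the grounds you give. The repair is already implicit in your (ii$'$): under strong $(v,q)$-admissibility every $l$-th coordinate $z_l$ of every $\lambda_r(x)\in\Lambda_x$ lies in some $\mc{Z}^l_p$ and hence satisfies $v_l z_l\equiv n\pmod{q_l}$ for an integer $n$, so each entry $(V(x))_{sr}=\prod_{l} e^{-2\pi i (v_l/q_l) z_l\, j_{s,l}}$ is a product of $q_l$-th roots of unity (the shift indices $j_{s,l}$ are integers, and the $\eta$ contribution vanishes since $\eta\in\Lambda^*$ and $\lambda_r(x)\in\Lambda$). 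Thus $V(x)$ actually does take values in a finite set of $k\times k$ matrices regardless of boundedness, each invertible by Theorem~\ref{thm:vander}, and the one-line finiteness conclusion then goes through. In short, strong admissibility is doing real work even in the ``short'' proof --- you cannot relegate it to a remark.

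The paper's own proof is your route (ii$'$): it threads the $W^l$ factorization from the proof of Theorem~\ref{thm:vander} through a chain of two-sided inequalities to bound every singular value of $V(x)$ by products $\prod_l\|W^l(x)^{\pm 1}\|$, and then invokes strong admissibility to place the nodes of each $W^l_{(p,p)}(x)$ among the $q_l$-th roots of unity, giving uniform bounds on those norms and hence on $\alpha,\beta$. Once (ii) is corrected as above it is arguably cleaner than the paper's argument and sidesteps the bookkeeping with the coupling blocks $X_p$ that you flag as the obstacle in (ii$'$); the price is that it yields only existence of Riesz bounds rather than the explicit dependence on the $1$D Vandermonde condition numbers that the paper's approach (and your (ii$'$)) deliver.
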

\begin{proof}
Let $\Omega\subset \RR^d$ be a strongly $(v,q)$-admissible $k$-tile for a full lattice $\Lambda$. Since the matrix $V(x)$ given in \refeq{V} is  piecewise constant on $\Pi_\Lambda$, there exists a normalized eigenvector of $V(x)$, denoted by $u(x)=(u_1(x),\hdots, u_k(x))$, that corresponds to an eigenvalue with squared magnitude $\sigma(x)$. It follows that $u(x)$ is also piecewise constant and each of its entries $u_r(x)$ are functions in $L^2(\Pi_\lambda)$.

Define the measurable function $f(x)\in L^2(\Omega)$ by $f(\omega_r(x))\equiv u_r(x)$ for $x\in\Pi_\Lambda$. Square-integrability follows immediately:   $|\Pi_\lambda|=\int_{\Pi_\lambda}\sum_{r=1}^k |u_r(x)|^2dx=\int_{\Pi_\lambda} \sum_{r=1}^k|f(\omega_r(x))|^2dx=\int_{\Omega} |f(x)|^2dx$. By taking the $\ell^2$-norm of  the vector $(F_{j_s}(x))_{s=1}^{k}=V(x) \(f(\omega_r(x))\)_{r=1}^{k}=V(x) u(x)$, 
\begin{align}	\sigma(x) =\|V(x)u(x) \|^2=\sum_{s=1}^{k}|F_{j_s}(x)|^2 \labeleq{sigmaf}. \end{align}
We will use the proof of Theorem \ref{thm:vander} to show that $\sigma(x)$ is uniformly bounded. Taking the $\ell_2$ norm of $\refeq{Fjl}$, for $l=1$ and $ j'\in \RR^{d-1}$, we obtain
\begin{align*}
\| V^{1}(x)^{-1}\|^{-2}\sum_{\substack{m_{1} \in \mc{M}^1}}| F^{m_1}_{j'}(x)|^2 &\leq 
\sum_{j_{1}\in\mc{K}^1({\mc{M}^{1}})} | F_{(j_{1},j')}(x)|^2
\leq \| V^{1}(x) \|^{2}\sum_{\substack{m_{1} \in \mc{M}^1}}| F^{m_{1}}_{j'}(x)|^2.
\end{align*}
Applying the inequality \refeq{wl} for $l=2,\hdots, d$,
\begin{align*}
\prod_{l=1}^{d}\| W^{l}(x)^{-1}\|^{-2} \sum_{\substack{m_{p} \in \mc{M}^{d}}}| F^{m_{p}}(x)|^2 &\leq 
\sum_{j\in\mc{K}^{d}(\mc{M}^{d})} | F_{j}(x)|^2 \leq \prod_{l=1}^{d}\| W^{l}(x)\|^{2} \sum_{\substack{m_{p} \in \mc{M}^{d}}}| F^{m_{p}}(x)|^2.
		\end{align*}
By \refeq{sigmaf} this implies that $\alpha \leq 	\sigma(x)  \leq \beta $,
where  \[\alpha=\inf_{x\in\Pi_\Lambda}\(\prod_{l=1}^{d}\| W^{l}(x)^{-1}\|^{-2}\), \qquad  \beta=\sup_{x\in\Pi_\Lambda}\(\prod_{l=1}^{d}\| W^{l}(x)\|^{2}\).\] 

The block matrices $W^l(x)$ in \refeq{wl} all have uniformly bounded norm because the square Vandermonde matrices $W^{l}_{(p,p)}(x)$ have nodes that form a subset of the $q_l$ roots of unity by the strong admissibility condition. Therefore $\alpha>0$, and $\beta<\infty$, and Theorem~\ref{thm:dualbasis} can then be applied to complete the proof.

\end{proof}

The Riesz bounds $A$ and $B$ give a sense of the stability of the reconstruction (\ref{def-rb1}). For the choice of lattice shifts $\{a_s\}$ given \refeq{shiftset}, these constants depend on the conditioning of the matrices $W^{l}(x)$. There are infinitely many feasible choices for $\delta\in \mathbb{R}^{d\times d}$ and $\eta\in \Lambda^*$ that can produce a Riesz basis of exponentials of the form \refeq{rb}, however, for $|\delta_{ll}|\ll1$ the condition numbers of these matrices grow large. It is, in general, a hard problem to estimate the condition number of a Vandermonde matrix \cite{Kunis2020,Aubel2017a,Gautschi1990}, and although the question of determining the optimal choice of $\delta$ is an important one, we consider this direction out of the scope of the present work. However, in special cases, we can show that the factorization of $V(x)$ produced by Algorithm \ref{alg:samp} can be used to derive optimally conditioned matrices. 

\begin{definition}\label{def-perfadmiss}
We say that a $k$-tile for a full lattice is {\it perfectly admissible} if there exists a vector $v$ such that it is strongly $(v,q^*)$-admissible, where $(q^*)_l=\#\mc{Z}^l_1=\hdots=\#\mc{Z}^l_{N_{l-1}}$. 
\end{definition}
 For example, any bounded multi-tiling for the full lattice $\Lambda=M\ZZ^d$ of the form $\Omega = \cup_{z\in \mathcal{M}^d} (\Pi_\Lambda+Mz)$ with $\mathcal{M}^d=\{Mz\mid z=(z_1, \hdots, z_d)^T\mid \in \ZZ^d, \mid |z_i|\leq K, 1\leq i \leq d\}$ for any $K\geq 1$ is perfectly admissible. We show next that perfect admissibility guarantees the existence of an orthogonal basis of exponentials.

\begin{corollary} Any perfectly admissible $k$-tile for a full lattice $\Lambda$ admits an orthogonal basis of exponentials $\{e_l\}_{l\in L}$ of the form \refeq{rb}.

\end{corollary}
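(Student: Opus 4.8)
The plan is to show that for a perfectly admissible $k$-tile, the choice of lattice shifts from Theorem \ref{thm:vander} with $q = q^*$ makes the matrix $V(x)$ in \refeq{V} a scalar multiple of a unitary matrix, so that $V(x)^*V(x) = kI$, and then invoke Remark \ref{rem:dual}(3) to conclude that $\{e_l\}_{l\in L}$ is self-dual, hence an orthogonal basis. So the first step is to recall, from the proof of Corollary \ref{cor:Riesz}, the factorization $V^l = [[V^{l-1}]]\,U\,W^l$ built up through the levels $l = 1, \dots, d$, where each $W^l$ is block diagonal with square Vandermonde blocks $W^l_{(p,p)}$ whose nodes are roots of unity. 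The key observation is that under perfect admissibility, every block $W^l_{(p,p)}$ is a \emph{full} DFT matrix of size $\#\mc{Z}^l_p = (q^*)_l$, since the numbers $v_l z_l \bmod q_l^*$ with $z_l \in \mc{Z}^l_p$ are $(q^*)_l$ distinct integers, i.e. they exhaust $\{0, 1, \dots, (q^*)_l - 1\}$. A full DFT matrix of size $N$ satisfies $W^*W = NI$.

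Next I would track how this orthogonality propagates through the factorization. When $q = q^*$, in the recursion for $V^l$ the auxiliary matrices $\tilde W^l_{(q,p)}$ have the same number of columns and rows structure forced by the common cardinality $(q^*)_l$; I expect the whole block lower-triangular matrix $[[V^{l-1}]]$ together with the permutation $U$ to assemble so that $V^l$ inherits $(V^l)^*V^l = c_l I$ for a constant $c_l$ equal to the product of the relevant DFT sizes. Carrying this inductively from $l = 1$ (where $V^1 = W^1_{(1,1)}$ is literally a $(q^*)_1 \times (q^*)_1$ DFT matrix, so $(V^1)^*V^1 = (q^*)_1 I$) up to $l = d$ gives $V(x)^*V(x) = \bigl(\prod_{l=1}^d (q^*)_l\bigr) I = kI$, using that $\prod_l (q^*)_l = \#\mc{M}^d = k$ for a perfectly admissible tile (this cardinality count follows from Lemma \ref{lem:shift} and the tree structure, since at every level every parent has exactly $(q^*)_l$ children). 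Since this holds for almost every $x \in \Pi_\Lambda$ and $V(x)$ is piecewise constant, the singular values are uniformly equal to $\sqrt{k}$, so the hypotheses of Theorem \ref{thm:dualbasis} hold with $\alpha = \beta = \sqrt k$.

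Finally, with $V(x)^*V(x) = kI$ established, I would apply Remark \ref{rem:dual}(3) directly: it states precisely that \refeq{bio-basis} forms an orthogonal basis if and only if $V(x)^*V(x) = kI$. Alternatively one can verify self-duality by hand from \refeq{hlambda}: when $V(x)^{-1} = \frac1k V(x)^*$, the bracketed factor in \refeq{hlambda} becomes $k \sum_r (V)_{sr} \overline{(V)_{sr}}\,\frac1k \chi_{\Omega_r}$, and summing $|(V)_{sr}|^2 = 1$ over $r$ (each entry has modulus one) against the partition $\{\Omega_r\}$ yields $g_l = e_l$. Either way the conclusion is that $\{e_l\}_{l\in L}$ is an orthogonal basis of $L^2(\Omega)$ of the form \refeq{rb}.

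The main obstacle I anticipate is the bookkeeping in the inductive step: showing that the permutation $U$ and the block structure of $[[V^{l-1}]]$ combine so that $(V^l)^*V^l$ stays a scalar multiple of the identity rather than merely being well-conditioned. This requires checking that the off-diagonal blocks $X_p$ (built from $\tilde V^{l-1}_p$, $\tilde U_p$, $[\tilde W^l_p]$ and $[W^l_p]^{-1}$) do not destroy orthogonality — which should work out because under $q = q^*$ the rectangular Vandermonde pieces $\tilde W^l_{(q,p)}$ are submatrices of the same full DFT matrix and the column spaces align correctly — but verifying this cleanly is the delicate part. A safer route, which I would take if the direct block computation gets unwieldy, is to bypass the factorization entirely: directly exhibit the nodes of $V(x)$. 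Under perfect admissibility one can choose the shift set \refeq{shiftset} so that $\{e^{-2\pi i \lambda_r(x)\cdot a_s}\}$ is, up to relabeling, the character table of the finite abelian group $\prod_{l=1}^d \ZZ/(q^*)_l\ZZ$ evaluated on all of its elements and characters, which is manifestly a $k \times k$ matrix with $V^*V = kI$; then Remark \ref{rem:dual}(3) finishes it immediately.
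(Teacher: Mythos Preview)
Your proposal is correct, and your ``safer route'' is exactly what the paper does: the paper observes that under perfect admissibility one can order things so that $V(x)=W^1_1\otimes W^2_1\otimes\cdots\otimes W^d_1$ is a Kronecker product of full DFT matrices (equivalently, the character table of $\prod_{l}\ZZ/(q^*)_l\ZZ$), whence $\kappa(V(x))=\prod_l\kappa(W^l_1)=1$, so $V(x)^*V(x)=kI$ and Remark~\ref{rem:dual}(3) gives the orthogonal basis.

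Regarding your primary inductive route through $V^l=[[V^{l-1}]]\,U\,W^l$: your anticipated obstacle evaporates. Under perfect admissibility all the cardinalities $\#\mc{Z}^l_p$ coincide with $(q^*)_l$, so by the definition in Table~\ref{table:los} the sets $\mc{Q}_i(\mc{M}^l)$ are \emph{empty} for every $i\geq 2$. Consequently only the $p=1$ block survives in \refeq{FVFl}, the off-diagonal blocks $X_p$ never appear, and $[[V^{l-1}]]$ is simply a block-diagonal matrix with $(q^*)_l$ identical copies of $V^{l-1}$. The recursion then collapses directly to the Kronecker product, and no delicate cancellation among $\tilde W^l_{(q,p)}$ pieces is needed. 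So your two routes are in fact the same route; you just hadn't noticed that the structure you were worried about is vacuous in the perfectly admissible case.
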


\begin{proof} 
Define the lattice shifts $\{a_s\}_{s=1}^k$ by \refeq{shiftset} for the perfectly admissible domain $\Omega$ with $(v,q^*)$ as in Definition \ref{def-perfadmiss}. Then for $1< l\leq d$ and $1\leq p \leq \#\mc{M}^{l-1}$ the sets $\{v_l z\mid z\in \mc{Z}^l_p\}$ form a complete residue set \text{modulo} $\#\mc{Z}^l_p$, and it follows that each set $\mc{M}^l$ can be ordered so that $W^l_1=W^l_2=\hdots=W^l_p$ and the matrix $V$ in \refeq{V} has the form $V=W_1^1\otimes W^2_1\otimes\hdots \otimes W_1^d$. For the choice of $\delta_{ll} = v_l/\#\mc{Z}^l_1$, the nodes of $W^l_1$ form the $\#\mc{Z}^l_1$ roots of unity. Since the singular values of $V$ are products of the singular values of each factor, it holds that $\kappa(V(x))=\prod_l \kappa(W^l(x))=1$, where $\kappa(V(x))=\frac{\sigma_{\max}(V(x))}{\sigma_{\min}(V(x))}$ is the condition number of the matrix $V(x)$ for $x\in \Pi_\Lambda$, defined as the ratio of the maximum singular value $\sigma_{max}$ and minimum singular value $\sigma_{min}$.

Since $V(x)^*V(x)=kI \iff \kappa(V(x))=1$, Remark \ref{rem:dual} implies that the family $\{e_{l}\}_{l\in L}$ in \refeq{bio-basis} forms an orthogonal basis of exponentials for $L^2(\Omega)$.

\end{proof}

The following proves a partial converse result in one dimension.
\begin{corollary} Let $\Omega$ be $k$-tile for $\Lambda=\ZZ$. Suppose that there exists an orthogonal basis of exponentials for $L^2(\Omega)$ of the form $\{e_{\lambda^*+s \delta}\}_{\lambda^*\in \Lambda^*, 1\leq s \leq k}$ for some $\delta\in \RR$. If, in addition, there exists a number $Q\in \ZZ$ so that  $Q=\text{gcd}(\Lambda_x =\{z_1,\hdots, z_k\})$ for almost every $x\in \Pi_\Lambda$, then $\Omega$ is perfectly admissible.

\end{corollary}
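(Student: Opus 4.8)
The plan is to reverse-engineer the perfect-admissibility condition from the structure of a one-dimensional orthogonal basis of exponentials, using the $k=1$ case trivially and the characterization from Remark \ref{rem:dual}(3) that orthogonality forces $V(x)$ to be a (log) Hadamard matrix for almost every $x$. In one dimension the sets $\mc{M}^1=\mc{Z}^1_1=\Lambda_x$, and the only nontrivial data are the integers $z_1,\dots,z_k$ together with the single modulus $q$. First I would observe that for the shifts $a_s=s\delta$, the matrix $V(x)$ in \refeq{V} has entries $(V(x))_{sr}=e^{-2\pi i z_r \delta s}$, i.e.\ it is (up to a diagonal unitary factor) a Vandermonde matrix with nodes $w_r=e^{-2\pi i z_r\delta}$. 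The orthogonality hypothesis means $V(x)^*V(x)=kI$; in particular all columns are mutually orthogonal, and since a $k\times k$ Vandermonde matrix is unitary (up to scaling) precisely when its nodes are $k$ equally spaced points on the unit circle, I would extract that $\{e^{-2\pi i z_r\delta}\}_{r=1}^k$ must be a full set of $k$-th roots of unity (times a common phase). This is the key rigidity step.

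The next step is to translate "$\{e^{-2\pi i z_r\delta}\}$ are the $k$-th roots of unity" into a statement about $v$ and $q^*$ in the sense of Definition \ref{def-perfadmiss}. From the root-of-unity condition, $k\delta$ must be an integer (so that raising all nodes to the $k$-th power gives $1$), and the numbers $k\delta z_r$ are distinct modulo $k$, hence $\{k\delta z_r \bmod k\}_{r=1}^k=\{0,1,\dots,k-1\}$ is a complete residue system mod $k$. Writing $\delta = v/q$ in lowest-ish terms, or more directly taking $v = k\delta$ and $q^*=k$, I would check that $\Omega$ is strongly $(v,q^*)$-admissible: the single set $\{v z_r \bmod q^* : z_r\in\mc{Z}^1_1\}$ has exactly $\#\mc{Z}^1_1=k$ elements (weak admissibility) and consists of integers (strong admissibility), and $q^*=k=\#\mc{Z}^1_1$, which is exactly the requirement $(q^*)_l=\#\mc{Z}^l_1=\cdots=\#\mc{Z}^l_{N_{l-1}}$ in dimension $d=1$. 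Here is where the hypothesis $Q=\gcd(z_1,\dots,z_k)$ being constant over almost every $x$ enters: it guarantees that $v$ can be chosen as a single vector (independent of $x$) making all the $v z_r$ integers simultaneously — without a fixed gcd one might only get rationality of $v\delta^{-1}$-type relations that vary with $x$, so this hypothesis is what lets us pass from the pointwise Hadamard condition to a uniform choice of the admissibility data.

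The step I expect to be the main obstacle is making the passage from "pointwise orthogonality for a.e.\ $x$" to "a single $(v,q^*)$ works for a.e.\ $x$" fully rigorous — i.e.\ ruling out that $\delta$ and the node configuration conspire differently on different pieces of $\Pi_\Lambda$, and verifying that the $\gcd$ hypothesis is precisely the obstruction that gets removed. Concretely, I would argue that since $V(x)$ is piecewise constant (as in the proof of Corollary \ref{cor:Riesz}) there are finitely many node configurations; for each the root-of-unity condition forces $k\delta\in\ZZ$, so a single $\delta$ already does the job for the modulus, and the gcd condition aligns the "$v z_r$ are integers" requirement across pieces. I would close by noting that conversely one could also sanity-check the claim against the earlier corollary: perfectly admissible $k$-tiles admit orthogonal bases, so the statement identifies exactly which $1$-D orthogonal bases of this lattice-shift form come from perfect admissibility, namely those with the stated gcd regularity. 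A minor routine point to dispatch is the case $k=1$, where $V(x)=1$, $\delta$ is immaterial, $\mc{Z}^1_1$ is a singleton, and perfect admissibility holds trivially with $q^*=1$.
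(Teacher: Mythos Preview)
Your approach diverges from the paper's and has a concrete gap at the step where you assert $k\delta\in\ZZ$. You correctly observe that orthogonality forces $V(x)^*V(x)=kI$ and hence that the Vandermonde nodes $w_r=e^{-2\pi i z_r\delta}$ are $k$ equally spaced points on the unit circle; but ``equally spaced'' means $k$-th roots of unity \emph{times a common phase} $e^{i\theta}$ (as you yourself note a line earlier), so $w_r^{\,k}=e^{ik\theta}$, not $1$. Consequently $k\delta z_r$ need only be congruent to one another modulo $1$, not integers. A concrete instance: $k=2$, $\Lambda_x=\{z_1,z_2\}=\{1,3\}$ (so $Q=1$), $\delta=1/4$. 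The nodes are $\{-i,i\}$ and one checks $V^*V=2I$, yet $k\delta=1/2\notin\ZZ$ and your candidate $v=k\delta$ gives $vz_r\in\{1/2,3/2\}$, which are not integers, so strong $(v,q^*)$-admissibility fails for this choice of $v$. All that the equal-spacing condition delivers directly is $(z_r-z_{r'})k\delta\in\ZZ$ for every pair $r,r'$; this is not enough to manufacture the admissibility parameter from $\delta$ alone.

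The paper's route is different: it does not attempt to extract $v$ from the given $\delta$. Instead it invokes a result from \cite{Bermant2007} characterizing exactly when \emph{some} $\tau\in\RR$ makes the matrix $(V)_{st}=e^{2\pi i\tau s z_t/k}$ perfectly conditioned, namely if and only if $\{z_r/Q\bmod k\}_{r=1}^k$ is a complete residue system modulo $k$, where $Q=\gcd\{z_r\}$. The orthogonality hypothesis supplies such a $\tau$ (take $\tau=k\delta$), so the combinatorial condition on $\{z_r/Q\}$ follows; the paper then \emph{discards} $\delta$ and takes the explicit value $\tau=\pm 1/Q+nk$ to witness perfect admissibility with $q^*=k$. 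Thus the constant-gcd hypothesis is used not, as you suggest, merely to align a uniform $v=k\delta$ across the finitely many pieces of $\Pi_\Lambda$, but to supply the single number $Q$ from which the admissibility data are built directly.
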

\begin{proof}
For any set ${z_1,\hdots, z_k}\in\ZZ$, it is known that there exists a $\tau \in \mathbb{R}$ so that the Vandermonde matrix $V$ with entries $(V)_{st} =e^{\frac{2\pi i \tau s z_t}{k}}$ is perfectly conditioned, that is, $\kappa(V)=1$, if and only if $ \{ \frac{z_r}{Q} \text{ mod } k\}_{r=1}^{k}$ is a complete residue system, where $Q=\text{gcd}\{ z_r \}_{r=1}^{k}$ \cite{Bermant2007}. The choice $\tau =\pm\frac{1}{Q} + n k$ for an integer $n\in \ZZ$ produces a perfectly conditioned matrix.
For the choice $v=\tau/k$, and letting $q^*=k$,  $\Omega$ satisfies the conditions of perfect admissibility.

\end{proof}

\subsection*{Acknowledgements} The authors are grateful for discussions with David Walnut, Karamatou Yacoubou Djima, and Azita Mayeli. The authors also thank the anonymous reviewers for their helpful comments. The work of C.~Frederick is partially supported by the National Science Foundation grant DMS-1720306. K.~A.~Okoudjou was partially supported by  the National Science Foundation under Grant No.~DMS-1814253, and an MLK  visiting professorship at MIT.


\bibliographystyle{siam}
\bibliography{sampling.bib}

\begin{thebibliography}{10}

\bibitem{Aubel2017a}
{\sc C.~Aubel and H.~B{\"{o}}lcskei}, {\em {Vandermonde matrices with nodes in
  the unit disk and the large sieve}}, Applied and Computational Harmonic
  Analysis, 1 (2017), pp.~1--34.

\bibitem{Behmard2002}
{\sc H.~Behmard and A.~Faridani}, {\em Sampling of bandlimited functions on
  unions of shifted lattices}, Journal of Fourier Analysis and Applications, 8
  (2002), pp.~1--22.

\bibitem{Behmard2006}
{\sc H.~Behmard, A.~Faridani, and D.~Walnut}, {\em Construction of sampling
  theorems for unions of shifted lattices}, Sampling theory in signal and image
  processing, 5 (2006), pp.~297--319.

\bibitem{Bermant2007}
{\sc L.~Bermant and A.~Feuer}, {\em {On perfect conditioning of Vandermonde
  matrices on the unit circle}}, Electronic Journal of Linear Algebra, 16
  (2007), pp.~157--161.

\bibitem{Bezuglaya1993}
{\sc L.~Bezuglaya and V.~Katsnelson}, {\em The sampling theorem for functions
  with limited multi-band spectrum {I}}, Zeitschrift f{\"{u}}r Analysis und
  ihre Anwendungen, 12 (1993), pp.~511--534.

\bibitem{Cabrelli2018}
{\sc C.~Cabrelli and D.~Carbajal}, {\em {Riesz} bases of exponentials on
  unbounded multi-tiles}, Proceedings of the American Mathematical Society, 146
  (2018), pp.~1991--2004.

\bibitem{Cabrelli2020}
{\sc C.~Cabrelli, K.~Hare, and U.~Molter}, {\em {Riesz} bases of exponentials
  and the {B}ohr topology}, 2020.

\bibitem{DeCarli2015}
{\sc L.~D. Carli}, {\em Exponential bases on multi-rectangles in
  $\mathbb{R}^d$}, 2015.

\bibitem{Debernardi2019}
{\sc A.~Debernardi and N.~Lev}, {\em {Riesz bases of exponentials for convex
  polytopes with symmetric faces}}, 2019.

\bibitem{Faridani1994}
{\sc A.~Faridani}, {\em {A generalized sampling theorem for locally compact
  abelian groups}}, Mathematics of Computation, 63 (1994), pp.~307--307.

\bibitem{Frederick2018}
{\sc C.~Frederick}, {\em An {$L^2$}-stability estimate for periodic nonuniform
  sampling in higher dimensions}, Linear Algebra and Its Applications, 555
  (2018), pp.~361--372.

\bibitem{Fuglede1974}
{\sc B.~Fuglede}, {\em Commuting self-adjoint partial differential operators
  and a group theoretic problem}, Journal of Functional Analysis, 16 (1974),
  pp.~101--121.

\bibitem{Gautschi1990}
{\sc W.~Gautschi}, {\em {How (Un)stable are Vandermonde systems?}}, Asymptotic
  and computational analysis,  (1990).

\bibitem{Grepstad2014}
{\sc S.~Grepstad and N.~Lev}, {\em Multi-tiling and {Riesz} bases}, Advances in
  Mathematics, 252 (2014), pp.~1--6.

\bibitem{Grochenig2001}
{\sc K.~Gr{\"o}chenig}, {\em Non-uniform sampling in higher dimensions: From
  trigonometric polynomials to bandlimited functions}, in Modern Sampling
  Theory: Mathematics and Applications, J.~J. Benedetto and P.~J. S.~G.
  Ferreira, eds., Birkh{\"a}user Boston, Boston, MA, 2001, pp.~155--171.

\bibitem{Jaffard1991}
{\sc S.~Jaffard}, {\em A density criterion for frames of complex exponentials},
  1991.

\bibitem{Kammerer2019}
{\sc L.~K{\"{a}}mmerer and T.~Volkmer}, {\em Approximation of multivariate
  periodic functions based on sampling along multiple rank-1 lattices}, Journal
  of Approximation Theory, 246 (2019), pp.~1--27.

\bibitem{Kolountzakis2006b}
{\sc M.~Kolountzakis and M.~Matolcsi}, {\em Complex {H}adamard matrices and the
  spectral set conjecture}, Collectanea mathematica, 57 (2006), pp.~281--291.

\bibitem{Kolountzakis2013}
{\sc M.~N. Kolountzakis}, {\em Multiple lattice tiles and {Riesz} bases of
  exponentials}, Proceedings of the American Mathematical Society, 143 (2013),
  pp.~741--747.

\bibitem{KolMat06}
{\sc M.~N. Kolountzakis and M.~Matolcsi}, {\em Complex {H}adamard matrices and
  the spectral set conjecture}, Collect. Math., Extra (2006), pp.~281--291.

\bibitem{Kolountzakis2006}
{\sc M.~N. Kolountzakis and M.~Matolcsi}, {\em Tiles with no spectra}, Forum
  Mathematicum, 18 (2006), pp.~519--528.

\bibitem{Kozma2014}
{\sc G.~Kozma and S.~Nitzan}, {\em Combining {Riesz} bases}, Inventiones
  Mathematicae, 199 (2014), pp.~267--285.

\bibitem{Kunis2020}
{\sc S.~Kunis and D.~Nagel}, {\em {On the smallest singular value of
  multivariate Vandermonde matrices with clustered nodes}}, Linear Algebra and
  Its Applications, 604 (2020), pp.~1--20.

\bibitem{Lev2019}
{\sc N.~Lev and M.~Matolcsi}, {\em {The Fuglede conjecture for convex domains
  is true in all dimensions}}, apr 2019.

\bibitem{Lyubarskii1997}
{\sc Y.~I. Lyubarskii and K.~Seip}, {\em Sampling and interpolating sequences
  for multiband-limited functions and exponential bases on disconnected sets},
  The Journal of Fourier Analysis and Applications, 3 (1997), pp.~597--615.

\bibitem{Matolcsi2005}
{\sc M.~Matolcsi}, {\em Fuglede's conjecture fails in dimension 4}, Proceedings
  of the American Mathematical Society, 133 (2005), pp.~3021--3026.

\bibitem{Seip1995}
{\sc K.~Seip}, {\em On the connection between exponential bases and certain
  related sequences in ${L}^2(- \pi,\pi)$}, Journal of Functional Analysis, 130
  (1995), pp.~131--160.

\bibitem{Young01}
{\sc R.~M. Young}, {\em An introduction to nonharmonic {F}ourier series,
  revised first edition}, Academic Press, Inc., San Diego, CA, 2001.

\end{thebibliography}
\newpage 
\begin{appendices}

\begin{lstlisting}[caption = {MATLAB code for finding $\mathcal{K}^d(\mc{M}^d)$},style=Matlab-editor,
basicstyle=\ttfamily,
escapechar=]
function K=K_l(Ml)
[n_Ml,l]=size(Ml);
%  If $l=1$, return $\{0, \hdots, \#\mathcal{M}^1-1\}$, otherwise, append the sets $K_i^l\times Q_i^l$
if l==1
    K = (0:(n_Ml-1))';
    return
else
    [Ml_old, n_Zl]=Ml_sort(Ml,l-1);
    [n_Ml, ~]=size(Ml_old);
    K=[];
    for i=1:n_Ml
        if i==1
            Qi_l = (0:(n_Zl(i)-1))';
        else
            Qi_l = (n_Zl(i-1):(n_Zl(i)-1))';
        end

        Ml_i=Ml_old(i:end,:);
        Kl_i=K_l(Ml_i);
        for ii=1:length(Kl_i(:,1))
            for jj=1:length(Qi_l)
                K=[K; Kl_i(ii,:) Qi_l(jj)];
            end
            
        end
    end
end

%  Frequencies in tiling lattice $\Lambda$
function [Ml n_Zl]=Ml_sort(Md,l)
if l==0
    Ml=Md;
    n_Zl=0;
else
    [Ml ia, ic] = unique(Md(:,1:l),'rows');
    n_Zl = histc(ic,unique(ic));
    [n_Zl, idx] = sort(n_Zl);
    Ml=Ml(idx,:);
end


\end{lstlisting}

\end{appendices}

\end{document}